\begin{document}
%
%
\theoremstyle{plain}
\swapnumbers
    \newtheorem{thm}[figure]{Theorem}
    \newtheorem{prop}[figure]{Proposition}
    \newtheorem{lemma}[figure]{Lemma}
    \newtheorem{keylemma}[figure]{Key Lemma}
    \newtheorem{corollary}[figure]{Corollary}
    \newtheorem{fact}[figure]{Fact}
    \newtheorem{subsec}[figure]{}
    \newtheorem*{propa}{Proposition A}
    \newtheorem*{thma}{Theorem A}
    \newtheorem*{thmb}{Theorem B}
    \newtheorem*{thmc}{Theorem C}
    \newtheorem*{thmd}{Theorem D}
\theoremstyle{definition}
    \newtheorem{defn}[figure]{Definition}
    \newtheorem{example}[figure]{Example}
    \newtheorem{examples}[figure]{Examples}
    \newtheorem{notation}[figure]{Notation}
    \newtheorem{summary}[figure]{Summary}
\theoremstyle{remark}
        \newtheorem{remark}[figure]{Remark}
        \newtheorem{remarks}[figure]{Remarks}
        \newtheorem{warning}[figure]{Warning}
    \newtheorem{assume}[figure]{Assumption}
    \newtheorem{ack}[figure]{Acknowledgements}
\renewcommand{\thefigure}{\arabic{section}.\arabic{figure}}
%
%
%
\newenvironment{myeq}[1][]
{\stepcounter{figure}\begin{equation}\tag{\thefigure}{#1}}
{\end{equation}}
\newcommand{\myeqn}[2][]
{\stepcounter{figure}\begin{equation}
     \tag{\thefigure}{#1}\vcenter{#2}\end{equation}}
\newcommand{\mydiag}[2][]{\myeq[#1]\xymatrix{#2}}
\newcommand{\mydiagram}[2][]
{\stepcounter{figure}\begin{equation}
     \tag{\thefigure}{#1}\vcenter{\xymatrix{#2}}\end{equation}}
\newcommand{\mysdiag}[2][]
{\stepcounter{figure}\begin{equation}
     \tag{\thefigure}{#1}\vcenter{\xymatrix@R=25pt@C=-25pt{#2}}\end{equation}}
\newcommand{\myrdiag}[2][]
{\stepcounter{figure}\begin{equation}
     \tag{\thefigure}{#1}\vcenter{\xymatrix@R=25pt@C=23pt{#2}}\end{equation}}
\newcommand{\mytdiag}[2][]
{\stepcounter{figure}\begin{equation}
     \tag{\thefigure}{#1}\vcenter{\xymatrix@R=25pt@C=15pt{#2}}\end{equation}}
\newcommand{\myudiag}[2][]
{\stepcounter{figure}\begin{equation}
     \tag{\thefigure}{#1}\vcenter{\xymatrix@R=29pt@C=8pt{#2}}\end{equation}}
\newcommand{\myvdiag}[2][]
{\stepcounter{figure}\begin{equation}
     \tag{\thefigure}{#1}\vcenter{\xymatrix@R=19pt@C=3pt{#2}}\end{equation}}
\newcommand{\mywdiag}[2][]
{\stepcounter{figure}\begin{equation}
     \tag{\thefigure}{#1}\vcenter{\xymatrix@R=12pt@C=30pt{#2}}\end{equation}}
\newcommand{\myydiag}[2][]
{\stepcounter{figure}\begin{equation}
     \tag{\thefigure}{#1}\vcenter{\xymatrix@R=35pt@C=30pt{#2}}\end{equation}}
\newcommand{\myzdiag}[2][]
{\stepcounter{figure}\begin{equation}
     \tag{\thethm}{#1}\vcenter{\xymatrix@R=10pt@C=25pt{#2}}\end{equation}}
%
\newenvironment{mysubsection}[2][]
{\begin{subsec}\begin{upshape}\begin{bfseries}{#2.}
\end{bfseries}{#1}}
{\end{upshape}\end{subsec}}
\newenvironment{mysubsect}[2][]
{\begin{subsec}\begin{upshape}\begin{bfseries}{#2\vsn.}
\end{bfseries}{#1}}
{\end{upshape}\end{subsec}}
\newcommand{\supsect}[2]
{\vspace*{-5mm}\quad\\\begin{center}\textbf{{#1}}\vsm.~~~~\textbf{{#2}}\end{center}}
\newcommand{\sect}{\setcounter{figure}{0}\section}
%
%
\newcommand{\wh}{\ -- \ }
\newcommand{\wwh}{-- \ }
\newcommand{\w}[2][ ]{\ \ensuremath{#2}{#1}\ }
\newcommand{\ww}[1]{\ \ensuremath{#1}}
\newcommand{\www}[2][ ]{\ensuremath{#2}{#1}\ }
\newcommand{\wwb}[1]{\ \ensuremath{(#1)}-}
\newcommand{\wb}[2][ ]{\ (\ensuremath{#2}){#1}\ }
\newcommand{\wref}[2][ ]{\ (\ref{#2}){#1}\ }
\newcommand{\wwref}[2]{\ (\ref{#1})-(\ref{#2})\ }
%
%
\newcommand{\hsp}{\hspace*{9 mm}}
\newcommand{\hs}{\hspace*{4 mm}}
\newcommand{\hsn}{\hspace*{1 mm}}
\newcommand{\hsm}{\hspace*{2 mm}}
\newcommand{\vsn}{\vspace{2 mm}}
\newcommand{\vs}{\vspace{5 mm}}
\newcommand{\vsm}{\vspace{4 mm}}
%
%
\newcommand{\hra}{\hookrightarrow}
\newcommand{\xra}[1]{\xrightarrow{#1}}
\newcommand{\xepic}[1]{\xrightarrow{#1}\hspace{-5 mm}\to}
\newcommand{\lora}{\longrightarrow}
\newcommand{\lra}[1]{\langle{#1}\rangle}
\newcommand{\llrra}[1]{\langle\langle{#1}\rangle\rangle}
\newcommand{\llrr}[2]{\llrra{#1}\sb{#2}}
\newcommand{\llrrp}[2]{\llrra{#1}'\sb{#2}}
\newcommand{\lrf}{\langle\langle f\lo{0,1}\rangle\rangle}
\newcommand{\lrfn}[1]{\lrf\sb{#1}}
\newcommand{\lras}[1]{\langle{#1}\rangle\sb{\ast}}
\newcommand{\lrau}[1]{\langle{#1}\rangle\sp{\ast}}
\newcommand{\vlam}{\vec{\lambda}}
\newcommand{\epic}{\to\hspace{-3.5 mm}\to}
\newcommand{\xhra}[1]{\overset{#1}{\hookrightarrow}}
\newcommand{\efp}{\to\hspace{-1.5 mm}\rule{0.1mm}{2.2mm}\hspace{1.2mm}}
\newcommand{\efpic}{\mbox{$\to\hspace{-3.5 mm}\efp$}}
\newcommand{\up}[1]{\sp{(#1)}}
\newcommand{\bup}[1]{\sp{[{#1}]}}
\newcommand{\lo}[1]{\sb{(#1)}}
\newcommand{\bp}[1]{\sb{[#1]}}
\newcommand{\hfsm}[2]{{#1}\ltimes{#2}}
\newcommand{\sms}[2]{{#1}\wedge{#2}}
\newcommand{\rest}[1]{\lvert\sb{#1}}
%
%
\newcommand{\ab}{\operatorname{ab}}
\newcommand{\Aut}{\operatorname{Aut}}
\newcommand{\Cof}{\operatorname{Cof}}
\newcommand{\Coker}{\operatorname{Coker}}
\newcommand{\colim}{\operatorname{colim}}
\newcommand{\comp}{\mbox{\sf comp}}
\newcommand{\Cone}{\operatorname{Cone}}
\newcommand{\csk}[1]{\operatorname{csk}\sp{#1}}
\newcommand{\diag}{\operatorname{diag}}
\newcommand{\ev}{\operatorname{ev}}
\newcommand{\Ext}{\operatorname{Ext}}
\newcommand{\Fib}{\operatorname{Fib}}
\newcommand{\ho}{\operatorname{ho}}
\newcommand{\hocofib}{\operatorname{hocofib}}
\newcommand{\hocolim}{\operatorname{hocolim}}
\newcommand{\holim}{\operatorname{holim}}
\newcommand{\Hom}{\operatorname{Hom}}
\newcommand{\inc}{\operatorname{inc}}
\newcommand{\Ker}{\operatorname{Ker}}
\newcommand{\Id}{\operatorname{Id}}
\newcommand{\Image}{\operatorname{Im}}
\newcommand{\Obj}[1]{\operatorname{Obj}\,{#1}}
\newcommand{\op}{\sp{\operatorname{op}}}
\newcommand{\pt}{\operatorname{pt}}
\newcommand{\sk}[1]{\operatorname{sk}\sb{#1}}
\newcommand{\Sq}[1]{\operatorname{Sq}\sp{#1}}
\newcommand{\Tot}{\operatorname{Tot}}
\newcommand{\uTot}{\underline{\Tot}}
%
%
\newcommand{\map}{\operatorname{map}}
%
%
\newcommand{\Hu}[3]{H\sp{#1}({#2};{#3})}
\newcommand{\Hus}[2]{\Hu{\ast}{#1}{#2}}
\newcommand{\HiR}[1]{\Hu{\ast}{#1}{R}}
%
%
\newcommand{\A}{\mathcal{A}}
\newcommand{\B}{\mathcal{B}}
\newcommand{\C}{\mathcal{C}}
\newcommand{\D}{\mathcal{D}}
\newcommand{\E}{\mathcal{E}}
\newcommand{\F}{\mathcal{F}}
\newcommand{\M}{\mathcal{M}}
\newcommand{\Map}{{\EuScript Map}}
\newcommand{\MA}{\Map\sb{\A}}
\newcommand{\MAB}{\Map\sb{\A}\sp{\B}}
\newcommand{\MB}{\Map\sp{\B}}
\newcommand{\OO}{\mathcal{O}}
\newcommand{\PP}[2]{\mathcal{P}\sp{#1}\sb{#2}}
\newcommand{\hP}[2]{\widehat{\mathcal{P}}\sp{#1}\sb{#2}}
\newcommand{\Ss}{\mathcal{S}}
\newcommand{\Sa}{\Ss\sb{\ast}}
\newcommand{\U}{\mathcal{U}}
\newcommand{\eW}{{\EuScript W}}
\newcommand{\eX}{{\EuScript X}}
\newcommand{\eY}{{\EuScript Y}}
\newcommand{\eZ}{{\EuScript Z}}
%
%
\newcommand{\hy}[2]{{#1}\text{-}{#2}}
\newcommand{\Alg}[1]{{#1}\text{-}{\mbox{\sf Alg}}}
\newcommand{\Mod}[1]{{#1}\text{-}{\mbox{\sf Mod}}}
\newcommand{\Set}{\mbox{\sf Set}}
\newcommand{\Seta}{\Set\sb{\ast}}
\newcommand{\Cat}{\mbox{\sf Cat}}
\newcommand{\Ch}{\mbox{\sf Ch}}
\newcommand{\Grp}{\mbox{\sf Gp}}
\newcommand{\OC}{\hy{\OO}{\Cat}}
\newcommand{\SC}{\hy{\Ss}{\Cat}}
\newcommand{\SaC}{\hy{\Sa}{\Cat}}
\newcommand{\SO}{(\Ss,\OO)}
\newcommand{\SaO}{(\Sa,\OO)}
\newcommand{\SOC}{\hy{\SO}{\Cat}}
\newcommand{\SaOC}{\hy{\SaO}{\Cat}}
\newcommand{\Top}{\mbox{\sf Top}}
\newcommand{\Topa}{\Top\sb{\ast}}
%
%
\newcommand{\FF}{\mathbb F}
\newcommand{\Fp}{\FF\sb{p}}
\newcommand{\Ft}{\FF\sb{2}}
\newcommand{\Fq}{\FF\sb{q}}
\newcommand{\NN}{\mathbb N}
\newcommand{\QQ}{\mathbb Q}
\newcommand{\ZZ}{\mathbb Z}
%
%
\newcommand{\bA}{{\mathbf A}}
\newcommand{\bB}{{\mathbf B}}
\newcommand{\bC}{{\mathbf C}}
\newcommand{\bD}{{\mathbf D}}
\newcommand{\cD}{D}
\newcommand{\bDel}{\mathbf{\Delta}}
\newcommand{\Del}[1]{\bDel\sp{#1}}
\newcommand{\Dels}[2]{\Del{#1}\lo{#2}}
\newcommand{\Dlt}[1]{\Delta[{#1}]}
\newcommand{\bE}{{\mathbf E}}
\newcommand{\be}[1]{{\mathbf e}\sp{#1}}
\newcommand{\bF}{{\mathbf F}}
\newcommand{\Fv}[1]{\bF\bp{#1}}
\newcommand{\Fk}[1]{F\sp{#1}}
\newcommand{\Fpp}{\hspace{0.5mm}'\hspace{-0.7mm}F}
\newcommand{\Fpk}[1]{\hspace{0.5mm}'\hspace{-0.7mm}F\sp{#1}}
\newcommand{\Fn}[2]{F\sp{#1}\bp{#2}}
\newcommand{\bG}{{\mathbf G}}
\newcommand{\Gv}[1]{\bG\bp{#1}}
\newcommand{\hGv}[1]{\widehat{\bG}\bp{#1}}
\newcommand{\Gn}[2]{G\sp{#1}\bp{#2}}
\newcommand{\hGn}[2]{\widehat{G}\sp{#1}\bp{#2}}
\newcommand{\tg}[1]{\widetilde{g}\sp{#1}}
\newcommand{\bH}{{\mathbf H}}
\newcommand{\wH}{\widehat{H}}
\newcommand{\Hv}[1]{\bH\bp{#1}}
\newcommand{\hHv}[1]{\wH\bp{#1}}
\newcommand{\Hn}[2]{H\sp{#1}\bp{#2}}
\newcommand{\tH}[1]{\widetilde{H}\sp{#1}}
\newcommand{\tHn}[2]{\tH{#1}\bp{#2}}
\newcommand{\bK}{{\mathbf K}}
\newcommand{\KP}[2]{\bK({#1},{#2})}
\newcommand{\KR}[1]{\KP{R}{#1}}
\newcommand{\KZ}[1]{\KP{\ZZ}{#1}}
\newcommand{\KF}[1]{\KP{\Fp}{#1}}
\newcommand{\bM}{{\mathbf M}}
\newcommand{\bS}[1]{{\mathbf S}\sp{#1}}
\newcommand{\bT}{\mathbf{\Theta}}
\newcommand{\TA}{\bT\sp{\A}}
\newcommand{\TB}{\bT\sb{\B}}
\newcommand{\ThB}{\Theta\sb{\B}}
\newcommand{\TAB}{\bT\sp{\A}\sb{\B}}
\newcommand{\TR}{\Theta\sb{R}}
\newcommand{\TRl}{\Theta\sb{R}\sp{\lambda}}
\newcommand{\bU}{{\mathbf U}}
\newcommand{\bV}{{\mathbf V}}
\newcommand{\bW}{{\mathbf W}}
\newcommand{\bX}{{\mathbf X}}
\newcommand{\bY}{{\mathbf Y}}
\newcommand{\hY}{\widehat{\bY}}
\newcommand{\bZ}{{\mathbf Z}}
%
%
\newcommand{\cu}[1]{c({#1})\sp{\bullet}}
\newcommand{\cd}[1]{c({#1})\sb{\bullet}}
\newcommand{\vare}{\varepsilon}
\newcommand{\bv}{{\bm \vare}}
\newcommand{\bve}[1]{\bv\sb{[{#1}]}}
\newcommand{\bk}{[\mathbf{k}]}
\newcommand{\bn}{[\mathbf{n}]}
\newcommand{\dz}[1]{d\sp{0}\sb{#1}}
\newcommand{\od}{\overline{\partial}}
\newcommand{\ud}{\overline{d}}
\newcommand{\vdz}[1]{{\underline{d}\sp{0}\sb{#1}}}
\newcommand{\odz}[1]{\od\sp{#1}\sb{0}}
\newcommand{\udz}[1]{\ud\sp{0}\sb{#1}}
\newcommand{\Ad}{A\sb{\bullet}}
\newcommand{\Au}{A\sp{\bullet}}
\newcommand{\Bu}{B\sp{\bullet}}
\newcommand{\Bd}{B\sb{\bullet}}
\newcommand{\Cu}{\cM{\bullet}}
\newcommand{\Cd}{C\sb{\bullet}}
\newcommand{\Ds}{\cD\sp{\ast}}
\newcommand{\Du}[1]{\bD\sp{\bullet}\bp{#1}}
\newcommand{\oD}[1]{\overline{D}\sp{#1}}
\newcommand{\sD}[2]{\mathbf{\Sigma}\bD\sp{#1}\bp{#2}}
\newcommand{\sDu}[1]{\sD{\bullet}{#1}}
\newcommand{\Deu}{\Del{\bullet}}
\newcommand{\hM}[2]{\widehat{\bM}\sp{#1}\bp{#2}}
\newcommand{\tS}{\widetilde{\Sigma}}
\newcommand{\Ud}{\bU\sb{\bullet}}
\newcommand{\oV}[1]{\overline{V}\sb{#1}}
\newcommand{\Vd}{V\sb{\bullet}}
\newcommand{\Vu}{V\sp{\bullet}}
\newcommand{\oW}[1]{\overline{\bW}\hspace{0.3mm}\sp{#1}}
\newcommand{\Wu}{\bW\sp{\bullet}}
\newcommand{\Wn}[2]{\bW\sp{#1}\bp{#2}}
\newcommand{\W}[1]{\Wn{\bullet}{#1}}
\newcommand{\hW}{\widehat{\bW}}
\newcommand{\hWn}[2]{\hW\quad\hspace*{-4mm}\sp{#1}\bp{#2}}
\newcommand{\hWu}[1]{\hWn{\bullet}{#1}}
\newcommand{\tW}{\widetilde{\bW}}
\newcommand{\tWn}[2]{\tW\quad\hspace*{-4mm}\sp{#1}\bp{#2}}
\newcommand{\tWu}[1]{\tWn{\bullet}{#1}}
\newcommand{\prn}[1]{\pi\sb{[{#1}]}}
\newcommand{\Xu}{\bX\sp{\bullet}}
\newcommand{\Yu}{\bY\sp{\bullet}}
\newcommand{\Zu}{\bZ\sp{\bullet}}
%
%
\newcommand{\Bal}[1][ ]{$\ThB$-algebra{#1}}
\newcommand{\Tal}[1][ ]{$\Theta$-algebra{#1}}
\newcommand{\TRal}[1][ ]{$\TR$-algebra{#1}}
\newcommand{\TAlg}{\Alg{\Theta}}
\newcommand{\BAlg}{\Alg{\ThB}}
\newcommand{\RAlg}{\Alg{\TR}}
%
%
\newcommand{\cH}[3]{H\sp{#1}({#2};{#3})}
\newcommand{\HR}[2]{\cH{#1}{#2}{R}}
\newcommand{\HsR}[1]{\HR{\ast}{#1}}
\newcommand{\HF}[2]{\cH{#1}{#2}{\Fp}}
\newcommand{\HsF}[1]{\HF{\ast}{#1}}
%
%
\newcommand{\ma}[1][ ]{mapping algebra{#1}}
\newcommand{\Ama}[1][ ]{$\A$-mapping algebra{#1}}
\newcommand{\ABma}[1][ ]{$\A$-$\B$-bimapping algebra{#1}}
\newcommand{\Bma}[1][ ]{$\B$-dual mapping algebra{#1}}
\newcommand{\Tma}[1][ ]{$\bT$-mapping algebra{#1}}
\newcommand{\lin}[1]{\{{#1}\}}
\newcommand{\fff}{\mathfrak{f}}
\newcommand{\fM}{\mathfrak{M}}
\newcommand{\fMA}{\fM\sp{\A}}
\newcommand{\fMB}{\fM\sb{\B}}
\newcommand{\fMAB}{\fM\sp{\A}\sb{\B}}
\newcommand{\fMT}{\fM\sb{\bT}}
\newcommand{\fMR}{\fM\sb{R}}
\newcommand{\fT}{\mathfrak{T}}
\newcommand{\fTd}{\fT\sb{\bullet}}
\newcommand{\fV}{\mathfrak{V}}
\newcommand{\fVd}{\fV\sb{\bullet}}
\newcommand{\fVu}{\fV\sp{\bullet}}
\newcommand{\fW}{\mathfrak{W}}
\newcommand{\fWd}{\fW\sb{\bullet}}
\newcommand{\fX}{\mathfrak{X}}
\newcommand{\fY}{\mathfrak{Y}}
\newcommand{\fZ}{\mathfrak{Z}}
\newcommand{\fin}{\operatorname{fin}}
\newcommand{\init}{\operatorname{init}}
\newcommand{\vf}{v\sb{\fin}}
\newcommand{\vi}{v\sb{\init}}
%
%
%
\title{Higher structure in the unstable Adams spectral sequence}
%
\author{Samik Basu, David Blanc, and Debasis Sen}
\address{Department of Mathematical and Computational Science\\
Indian Association for the Cultivation of Science\\ Kolkata - 700032, India}
\email{mcssb@iacs.res.in}
\address{Department of Mathematics\\ University of Haifa\\ 3498838 Haifa\\ Israel}
\email{blanc@math.haifa.ac.il}
\address{Department of Mathematics and Statistics\\
Indian Institute of Technology, Kanpur\\ Uttar Pradesh 208016\\ India}
\email{debasis@iitk.ac.in}

\date{\today}

\subjclass[2010]{Primary: 55T15; \ secondary: 55S20, 55P60}

\keywords{Unstable Adams spectral sequence, higher cohomology operations, 
differentials, cosimplicial resolutions}

\begin{abstract}
We describe a variant construction of the unstable Adams spectral 
sequence for a space $\bY$, associated to any free simplicial resolution of
\w[,]{\HiR{\bY}} for \w{R=\Fp} or $\QQ$. We use this construction to describe the 
differentials and filtration in the spectral sequence in terms of appropriate 
systems of  higher cohomology operations.
\end{abstract}

\maketitle

\setcounter{section}{0}

%
%
\section*{Introduction}
\label{cint}

The original Adams spectral sequence of \cite{AdSS} calculates the stable
homotopy groups of a space $\bY$ at a prime $p$, starting with its
\ww{\Fp}-cohomology. Later, several unstable versions of this were proposed
(see \cite{CurtR,RectU,MPetM,BCurS,BKanS}), all shown in \cite[X,\S 6]{BKanH} to
agree for reasonable spaces $\bY$. There are also variants for computing
\w[,]{\pi\sb{\ast}\map(\bX,\bY)} as well as for more general coefficients,
but for simplicity we restrict attention here to the original version, for
coefficients in \w{R=\Fp} or $\QQ$.

The \ $E\sb{2}$-terms of both the stable and unstable spectral sequences for $\bY$
can be identified as certain graded \ww{\Ext} groups associated to \w[,]{\HiR{\bY}}
equipped with an action of the (stable or unstable) primary $R$-cohomology operations
(cf.\ \cite{AdSS,BKanS}). These can be computed from any resolution \w{\Vd}
of \w[,]{\HiR{\bY}} in an appropriate category of \TRal[s] (for \w[:]{R=\Fp} these
are modules or unstable algebras, respectively, over the mod $p$ Steenrod algebra).

We show here how, as in the stable case, the unstable Adams spectral
sequence for $\bY$ can be obtained from a realization
of any such algebraic resolution \w{\Vd\to\HiR{\bY}} by a cosimplicial space 
\w[,]{\Wu} constructed inductively through successive approximations \w{\W{n}}
which we think of as forming an unstable Adams resolution of $\bY$.

\begin{mysubsect}{Systems of higher cohomology operations}\label{sshoo}

In \cite{BSenH}, we showed how this construction of \w{\W{n}} can be used to define
certain ``universal higher cohomology operations'' associated to each $R$-good
space $\bY$, which can be used to distinguish it from other space $\bZ$ having
\w{\HiR{\bZ}\cong\HiR{\bY}} (as \TRal[s).] Similar higher operations can also be used
to distinguish among homotopy classes of maps \w{f\sb{0},f\sb{1}:\bX\to\bY} between
$R$-good spaces which induce the same map
\w{f\sb{0}\sp{\ast}=f\sb{1}\sp{\ast}:\HiR{\bY}\to\HiR{\bX}} in cohomology.

Our second goal here is to show that analogous higher operations define the
differentials in the unstable Adams spectral sequence for $\bY$, as well as the
filtration index of each element in \w[.]{\pi\sb{\ast}\bY}

The notion of secondary operations in homotopy theory goes back at least to
the 1950's, when Massey products, Toda brackets, and Adem's secondary cohomology
operations first appeared (in \cite{MassN,TodG,TodC,AdemI}). Since then, there have
been several attempts to give general definitions of higher order operations (see
\cite{SpanH,MaunC,GWalkL,BMarkH,BJTurnH}), but none have been completely satisfactory.

Rather than trying to give one definition covering all variants, we will describe the
basic properties we expect of a general $n$-th order homotopy operation
\w{\llrr{X}{n}} for \w[:]{n\geq 2}

\begin{enumerate}
\renewcommand{\labelenumi}{(\alph{enumi})~}
\item It serves as the final obstruction to rectifying a homotopy-commutative
diagram \w{X:I\to\ho\M} \wwh or equivalently, making it $\infty$-homotopy
commutative \wh where $\M$ is a pointed simplicial model category and
$I$ is a suitable finite directed diagram of length \w[.]{n+1}
\item Its \emph{value}, for an appropriate choice of \emph{initial data}
\w[,]{G\up{n-1}} is a homotopy class in \w[,]{[X(\vi),\,\Omega\sp{n-1}X(\vf)]}
where \w{\vi} is weakly initial in $I$ and \w{\vf} is weakly terminal (cf.\
\cite[\S 2.1]{BMarkH}). This value is zero (the operation \emph{vanishes}) if and only
if the diagram can be rectified for this choice of initial data.
\item It has an associated system of lower order operations
\w[,]{(\llrr{X\rest{I\sb{k}}}{k})\sb{k=2}\sp{n-1}} corresponding to the filtration
of $I$ by initial (or final) segments \w{I\sb{k}} of length \w[.]{k+1}
The initial data \w{G\up{k}} for \w{\llrr{X}{k+1}} is determined by a
rectification of \w{X\rest{I\sb{k}}} made fibrant or cofibrant (in an appropriate
model category structure on \w[)]{\M\sp{I\sb{k}}} \wwh thus assuming in particular
that \w{\llrr{X\rest{I\sb{k}}}{k}} vanishes.
\item Two $n$-th order operations (for different indexing categories $I$ and $J$) are
\emph{equivalent} if the corresponding rectification problems are equivalent \wh
so there is a bijective correspondence of the initial data for the two, and the
resulting value for one vanishes if and only if it does so for the other.

We say that they are \emph{strongly equivalent} if the correspondence induces a
bijection of values in \w{[X(\vi),\,\Omega\sp{n-1}X(\vf)]} (so in particular the
two diagrams have the same initial and final objects in $\M$).
\item When $\M$ is a category of spaces or spectra, we say that \w{\llrr{X}{n}}
is an $n$-the order $R$-\emph{cohomology} operation, and that
\w{(\llrr{X\rest{I\sb{k}}}{k})\sb{k=2}\sp{n}} is a
\emph{system of higher $R$-cohomology operations}, if \w{Y(v)} is an $R$-GEM
for all \w[,]{v\in\Obj(J)\setminus\{\vi\}} for some strongly equivalent system
associated to \w[.]{Y:J\to\ho\M} 

Note that the spaces in the chosen diagram \w{X:I\to\ho\M} itself need not all 
be $R$-GEMs \wh only those in some equivalent system. See last paragraph in 
\S \ref{shco} below.
\end{enumerate}
\end{mysubsect}

\begin{mysubsect}{Main results}\label{smr}

This paper continues the project begun in \cite{BSenH}, intended to show how  
higher cohomology operations serve as a unifying setting for describing finer 
homotopy invariants of (the $R$-completion of) topological spaces.

This principle is applied here to the unstable Adams spectral sequence, 
but in fact Theorems B and C below apply equally to the stable Adams spectral 
sequence (since the former agrees with the latter in the stable range).

In Section \ref{crcwr} we recall from \cite{BSenH} how to associate to any CW 
resolution \w{\Vd} of the \TRal \w{\HiR{\bY}} (for any $R$-good space $\bY$),
an \emph{unstable Adams resolution}: that is to say, a cosimplicial space 
\w[,]{\Wu} obtained as the limit of a tower of fibrations:
$$
(\star)\hspace{20mm}
\dotsc~\to~\W{n}~\xra{\prn{n}}~\W{n-1}~\xra{\prn{n-1}}~\W{n-2}~\to~\dotsc~
\to~\W{0}~.\hspace{20mm}\quad
$$
\noindent Each stage \w{\W{n}} in this tower realizes the corresponding
skeleton \w{\sk{n}\Vd} of the algebraic resolution \w[;]{\Vd} this in turn
is obtained from \w{\sk{n-1}\Vd} by attaching a free \TRal \w{\oV{n}} by a suitable
map (as in the usual construction of a CW complex). We can realize \w{\oV{n}} by an
$R$-GEM \w[.]{\oW{n}}

In Section \ref{cuass} we then show:

\begin{thma}
The homotopy spectral sequence for the tower \w{(\star)} coincides with the 
usual unstable Adams spectral sequence for $\bY$.
\end{thma}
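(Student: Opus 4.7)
The plan is to identify \w{\Wu} as a cosimplicial Adams resolution of $\bY$ in the Bousfield-Kan sense, and then to match the tower \w{(\star)} level-wise with the Bousfield-Kan tower of partial totalizations \w[.]{\Tot\sb{n}\Wu} Once that identification is in place, the homotopy spectral sequences of the two towers agree from the \w{E\sb{1}}-page onward, and the latter is by definition (or by \cite[X]{BKanH}) the unstable Adams spectral sequence for $\bY$.

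First, I would check that \w{\Wu} is a cosimplicial $R$-resolution of $\bY$. By the inductive construction recalled in Section \ref{crcwr}, each codegree \w{\oW{n}} is an $R$-GEM realizing the free \TRal \w[,]{\oV{n}} and applying \w{\HiR{-}} to \w{\Wu} yields back the CW resolution \w[.]{\Vd\to\HiR{\bY}} This is precisely the condition required for \w{\Wu} to be a cosimplicial resolution in the sense of \cite[I.2.3]{BKanH}.

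Second, I would identify the \w{E\sb{2}}-terms. The \w{E\sb{1}}-term at filtration $n$ of the homotopy spectral sequence of \w{(\star)} is computed from \w[,]{\pi\sb{\ast}\oW{n}} which by the $R$-GEM property is naturally dual to \w[.]{\oV{n}} The \w{d\sp{1}}-differentials are induced by the coface maps of \w[,]{\Wu} which on cohomology are precisely the simplicial differentials of \w[.]{\Vd} Hence the \w{E\sb{2}}-page is the appropriate \TR-Ext group, matching the classical UASS \w{E\sb{2}}-term (see \cite[X]{BKanH} and \cite{BKanS}).

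Third, and this is the decisive step, I would compare the towers themselves. The claim is that \w{\W{n}} is weakly equivalent to \w[,]{\Tot\sb{n}\Wu} compatibly with the projection maps \w[.]{\prn{n}} Indeed, \w{\W{n}} is constructed inductively by attaching \w{\oW{n}} along matching data supplied by the coface maps of \w[,]{\Wu} which is precisely the recipe by which \w{\Tot\sb{n}\Wu} is obtained from \w[.]{\Tot\sb{n-1}\Wu} Comparing the defining fibre sequences at each stage produces the required level-wise weak equivalence, hence an isomorphism of homotopy spectral sequences from the \w{E\sb{1}}-page on.

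The principal obstacle is verifying that the matching objects implicit in the algebraic CW construction of \w{\Vd} coincide, after $R$-GEM realization, with the Bousfield-Kan matching objects \w{M\sp{n}\Wu} governing the passage \w[.]{\Tot\sb{n}\Wu\to\Tot\sb{n-1}\Wu} This is essentially bookkeeping involving the codegeneracy data, but it is where the detailed structure of the inductive construction of Section \ref{crcwr} enters most directly; care must be taken that the algebraic and topological matching data are compatible not merely up to homotopy but strictly enough to produce a rigid comparison of towers.
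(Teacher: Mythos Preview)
There is a genuine type error at the heart of your third step. The objects \w{\W{n}} in the tower \w{(\star)} are \emph{cosimplicial spaces}, not spaces, so the statement ``\w{\W{n}} is weakly equivalent to \w{\Tot\sb{n}\Wu}'' is ill-formed. What the paper actually does (in \S \ref{sucws}) is pass from the tower \wref{eqtower} of cosimplicial spaces to the tower of \emph{spaces} \wref[,]{eqmodtott} namely \w[,]{\Tot\sb{n}\W{n}} and observe that because each \w{\W{n}} is $n$-coskeletal and the maps \w{\prn{m}} induce weak equivalences on \w{\Tot\sb{k}} for \w[,]{k<m} this tower is level-wise equivalent to the standard $\Tot$ tower \wref{eqtott} for \w[.]{\Wu} Your description of \w{\W{n}} as ``built by attaching \w{\oW{n}} along matching data'' is also off: the construction in \S \ref{srcwr} proceeds via a cochain map \w{\Fpp} into \w[,]{\Ds} a cofiber in \emph{restricted} cosimplicial spaces, and then freely adjoining codegeneracies --- not via matching objects. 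So the ``principal obstacle'' you identify is not the right one.

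Once the tower of spaces is correctly identified with the $\Tot$ tower of \w[,]{\Wu} the paper's argument is close in spirit to your steps 1--2 but more concrete: it shows directly (using Reedy fibrancy and \wref[)]{eqcindstep} that \w[,]{N\sp{n}\Wu\simeq\oW{n}} so \w[,]{E\sb{1}\sp{n,n+k}\cong\pi\sb{k+n}\oW{n}} and that the \w{d\sb{1}}-differential reduces to the CW attaching map of \w[.]{\Vd} The \w{E\sb{2}}-term is then identified with \w{\pi\sp{\ast}\pi\sb{\ast}\Wu} via \cite[X, \S 6.4]{BKanH}, and this agrees with the unstable Adams \w{E\sb{2}}-term by \cite[\S 10.2]{BKanS}. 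The comparison of higher pages follows because both are the homotopy spectral sequence of the same $\Tot$ tower of a weak $R$-resolution.
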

\noindent See Theorem \ref{tuass}.

In Section \ref{cduass}, we associate to any such unstable Adams resolution 
\w{\Wu} of $\bY$ a sequence of higher cohomology operations
\w{\llrr{-}{r}~:~\pi\sb{k+n}\oW{n}\to\pi\sb{k+n+r-1}\oW{n+r}} (see Definition
\ref{dhco}), and show:

\begin{thmb}
Each value \w{\llrr{\gamma}{r}\in\pi\sb{k+n+r-1}\oW{n+r}=E\sb{1}\sp{n+r,k+n+r-1}}
of the $r$-th order operation \w{\llrr{-}{r}} represents the result of applying
the differential \w{d\sb{r}} to the element of \w{E\sb{r}\sp{n,k+n}} represented by
\w[.]{\gamma\in\pi\sb{k+n}\oW{n}=E\sb{1}\sp{n,k+n}}
\end{thmb}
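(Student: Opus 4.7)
The plan is to identify $d_r[\gamma]$ and $\llrr{\gamma}{r}$ by exhibiting both as the obstruction to lifting $\gamma$ one further stage in the tower $(\star)$, after having already lifted through $r-1$ earlier stages.

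First I would unpack the homotopy spectral sequence of the tower $(\star)$ in the style of Bousfield--Kan. By construction, each $\W{n}$ is obtained from $\W{n-1}$ by a cosimplicial attaching map that realizes the algebraic differential $\oV{n}\to\sk{n-1}\Vd$ of the \TRal-resolution, so the homotopy fiber of $\prn{n}:\W{n}\to\W{n-1}$ is (up to the usual cosimplicial degree shift) the $R$-GEM $\oW{n}$. Thus an element of $E_r^{n,k+n}$ is represented by $\gamma\in\pi_{k+n}\oW{n}=E_1^{n,k+n}$ together with a system of successive lifts $\tilde{\gamma}_i\in\pi_{k+n}\W{n+i}$ for $i=1,\dotsc,r-1$ witnessing the vanishing of the previous differentials $d_1,\dotsc,d_{r-1}$; the class $d_r[\gamma]$ is then defined by the connecting obstruction that records the failure of $\tilde{\gamma}_{r-1}$ to lift further to $\pi_{k+n}\W{n+r}$, and this obstruction lies in $\pi_{k+n+r-1}\oW{n+r}=E_1^{n+r,k+n+r-1}$, which is exactly the target claimed in the statement.

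Next I would compare this zigzag with the definition of $\llrr{-}{r}$ from Section~\ref{cduass} (and \cite{BSenH}). The higher operation is an $r$-th order cohomology operation in the sense of \S\ref{sshoo}, attached to the finite initial segment of length $r+1$ of the Adams tower beginning at the stage of $\oW{n}$. Property (c) forces its initial data $G\up{r-1}$ to be precisely a rectification of the tower data through $r-1$ stages, that is, a choice of successive lifts $\tilde\gamma_1,\dotsc,\tilde\gamma_{r-1}$ as above; its value, obtained by reading off the resulting homotopy class in $[\oW{n},\Omega^{r-1}\oW{n+r}]$ after pairing with the cosimplicial attaching map into the $(n{+}r)$-th stage, is the same connecting obstruction that defines $d_r[\gamma]$. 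Theorem A identifies the spectral sequence of $(\star)$ with the usual unstable Adams spectral sequence, so this gives the asserted equality of representatives in $E_1^{n+r,k+n+r-1}$.

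The main obstacle, and where most of the work should go, is the bookkeeping required to match the cosimplicial indexing and degree shifts: one must verify that the matching-fiber / $\Tot$-tower description of the unstable Adams spectral sequence coincides with the lifting-obstruction description in $(\star)$, and one must translate between the diagrammatic ``initial data $G\up{r-1}$'' language of \S\ref{sshoo} and the tower-lifting language. Since Theorem B only asserts that $\llrr{\gamma}{r}$ \emph{represents} $d_r[\gamma]$ at the $E_1$-level, we do not need to control the full indeterminacy of either side beyond the identification of the zigzag; any further coincidence of indeterminacies passes automatically to $E_r^{n+r,k+n+r-1}$. Once the indexing is pinned down, the identification reduces to the definitional observation that the construction of $\W{n+r}$ from $\W{n+r-1}$ by attaching $\oW{n+r}$, made inductively in Section~\ref{crcwr}, is precisely the cosimplicial realization of the $r$-th order obstruction whose value is $\llrr{\gamma}{r}$.
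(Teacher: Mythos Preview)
Your proposal is correct and follows essentially the same approach as the paper: both identify $d_r[\gamma]$ and $\llrr{\gamma}{r}$ as the common obstruction to lifting one further stage in the tower \wref{eqmodtott} after $r-1$ successful lifts, and then invoke Theorem~A (Theorem~\ref{tuass}) to identify this tower spectral sequence with the unstable Adams spectral sequence. The paper in fact states Theorem~\ref{thco} with no separate proof beyond ``From the description above we deduce,'' since the content has already been absorbed into the inductive construction of \S\ref{sindcon}, Proposition~\ref{pqfib}, and Lemma~\ref{lsmash}; your ``bookkeeping'' paragraph corresponds exactly to this material, though the paper carries it out concretely via the folding polytopes $\PP{N+1}{r}$ and the cone/path adjunction rather than abstractly. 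One small slip: the value of the operation lands in $[\bS{k},\Omega\sp{n+r-1}\oW{n+r}]\cong\pi\sb{k+n+r-1}\oW{n+r}$, not in $[\oW{n},\Omega\sp{r-1}\oW{n+r}]$.
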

\noindent See Theorem \ref{thco}.

Finally, in Section \ref{cfuass} we produce another sequence of higher cohomology 
operations \w[,]{\llrrp{-}{r}:\pi\sb{k}\bY\to\pi\sb{k+n}\oW{n}} 
and prove

\begin{thmc}
For any \w[,]{0\neq\gamma\in\pi\sb{k}\bY} the operation \w{\llrrp{\gamma}{n-1}}
vanishes while \w{\llrrp{\gamma}{n}\neq 0} if and only if $\gamma$ is represented
in the unstable Adams spectral sequence in filtration $n$ by the value of
\w[.]{\llrrp{\gamma}{n}\in\pi\sb{k}\Omega\sp{n}\oW{n}}
\end{thmc}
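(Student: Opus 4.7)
The plan is to interpret the operations $\llrrp{-}{r}$ as the iterated obstructions to nullifying the image of $\gamma$ through the successive stages of the tower $(\star)$, and then to match the condition ``$\llrrp{\gamma}{n-1}$ vanishes while $\llrrp{\gamma}{n}\neq 0$'' with $\gamma$ being detected in filtration $n$ of the spectral sequence of that tower, which by Theorem A coincides with the unstable Adams spectral sequence.

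More precisely, I would first unpack the definition of $\llrrp{\gamma}{r}$ from Section \ref{cfuass} and identify its initial data $G\up{r-1}$ with a coherent system of null-homotopies for the images $\widetilde{\gamma}\sb{j}\in\pi\sb{k}\W{j}$ of $\gamma$ under the composite of the augmentation $\bY\to\W{0}$ with the tower maps, for $0\leq j\leq r-1$. The value of $\llrrp{\gamma}{r}$ is then the class in $\pi\sb{k}\Omega\sp{r}\oW{r}\cong\pi\sb{k+r}\oW{r}$ obtained by lifting the null-homotopy of $\widetilde{\gamma}\sb{r-1}$ through the fiber sequence $\Omega\sp{r}\oW{r}\to\W{r}\to\W{r-1}$. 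Thus $\llrrp{\gamma}{r}$ vanishes precisely when this fiber class is null, i.e.\ when the coherent null-homotopy extends one further stage up the tower. Induction on $r$, using the long exact sequences of these fibrations, then gives that $\llrrp{\gamma}{j}$ vanishes for all $j<r$ if and only if $\widetilde{\gamma}\sb{r-1}$ is null-homotopic in $\W{r-1}$.

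On the spectral sequence side, the filtration on $\pi\sb{k}\Tot\Wu$ coming from the tower $(\star)$ is $F\sp{n}\pi\sb{k}\Tot\Wu=\ker(\pi\sb{k}\Tot\Wu\to\pi\sb{k}\W{n-1})$, and the $E\sb{\infty}$-representative of a class of filtration exactly $n$ is read off from its lift through the fiber $\pi\sb{k}\Omega\sp{n}\oW{n}\to\pi\sb{k}\W{n}$. The canonical map $\bY\to\Tot\Wu$ identifies $\gamma\in\pi\sb{k}\bY$ with an element of $\pi\sb{k}\Tot\Wu$, so the vanishing pattern of $\llrrp{\gamma}{j}$ for $j<n$ combined with the non-vanishing of $\llrrp{\gamma}{n}$ gives precisely the filtration $n$ condition, and the value of $\llrrp{\gamma}{n}$ in $\pi\sb{k+n}\oW{n}=E\sb{1}\sp{n,k+n}$ furnishes the $E\sb{\infty}$-representative. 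Theorem \ref{tuass} then transports the whole picture to the unstable Adams spectral sequence.

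The main obstacle will be showing that the initial data $G\up{n-1}$ arising from the vanishing of $\llrrp{\gamma}{j}$ for $j<n$ fits the framework of \S\ref{sshoo}(c), and that the resulting value of $\llrrp{\gamma}{n}$ is well-defined modulo the indeterminacy that produces $E\sb{n}\sp{n,k+n}$ from $E\sb{1}\sp{n,k+n}$. This parallels the analysis behind Theorem B but runs in the opposite direction up the tower: instead of starting from a class in $\pi\sb{k+n}\oW{n}$ and pushing down to detect differentials, we begin with $\gamma\in\pi\sb{k}\bY$ and push up the tower to detect its filtration. Once the well-definedness is in hand, matching the indeterminacies with those of the $E\sb{r}$-terms reduces to the same type of careful bookkeeping of iterated null-homotopies used in the proof of Theorem B.
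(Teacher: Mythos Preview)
Your high-level strategy is the same as the paper's: match the vanishing pattern of the operations with the filtration coming from the tower \wref{eqmodtott}, invoking Proposition \ref{pqfib} and Theorem \ref{tuass}. The first step of the paper's proof is exactly your observation that, by Lemma \ref{lsmasht}, the vanishing of \w{\llrrp{\gamma}{n-1}} is equivalent to the existence of a lift \w[,]{\Gv{n}} hence to Adams filtration \w[.]{\geq n}

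The gap is in your description of the \emph{value}. You write that ``the value of \w{\llrrp{\gamma}{r}} is then the class \dots obtained by lifting the null-homotopy of \w{\widetilde{\gamma}\sb{r-1}} through the fiber sequence''. But that is not the definition: in \S \ref{sindfil} and Definition \ref{dfhco} the value is the homotopy class of the explicit map \w{\Psi\sb{(F,G)}} on the boundary of the modified folding polytope \w[,]{\hP{n+1}{n+2}} built from the adjoints \w{\tH{j}} of the partial lifts. Showing that \emph{this} class coincides with the fiber-lift of \w{(p\bp{n})\sb{\ast}\gamma} in \w{\pi\sb{k}\Tot\sDu{n}\simeq\pi\sb{k}\Omega\sp{n}\oW{n}} is precisely the substance of the theorem, and your proposal treats it as a definition rather than something to be proved. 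Your identification of ``the main obstacle'' as an indeterminacy bookkeeping problem therefore misses the point.

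Concretely, the paper carries out this identification in two steps. First, one sets \w{\phi\sp{j}:=\Fk{j-1}\circ G\sp{j-1}} (these are the non-trivial boundary facets of \w[)]{\Psi\sb{(F,G)}} and verifies directly, using the coface relations in \w{\hWu{n}} and the simplification \w{\oD{j}=0} for \w{j\geq 2} from Remark \ref{ritcoface}, that the \w{\phi\sp{j}} satisfy the compatibility \wref{eqcoside} needed to assemble into a map \w[.]{\phi:\bS{k}\to\Tot\sDu{n}} Second, one writes down an explicit homotopy \w{K} in \w{\Tot\sb{n}\hWu{n}} (see \wref[),]{eqhtpy} using the prism collapse \w{t\sp{j}:\Del{j}\times I\to C\Del{j}} and the nullhomotopies \w[,]{G\sp{j}} between \w{(i\bp{n})\sb{\ast}\phi} and the image \w{\Gamma\bp{n}} of $\gamma$. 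Neither step is automatic from the long exact sequence of the quasifibration; both require unwinding the folding-polytope description against the cosimplicial structure of \w[.]{\hWu{n}} Your outline would become a proof once you supply these two computations.
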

\noindent See Theorem \ref{tfil}.

A simple example of the secondary cohomology operation associated to
an element in Adams filtration $1$ is given in \S \ref{eqsecopn}.
\end{mysubsect}

\begin{notation}\label{snac}
The category of finite ordered sets and order-preserving maps will be denoted
by $\Delta$ (cf.\ \cite[\S 2]{MayS}), with objects \w{\bn=[0<1<\dotsc n]}
\wb[,]{n\in\NN} so a \emph{cosimplicial object} \w{\Au}
in a category $\C$ is a functor \w[,]{\Delta\to\C} and a \emph{simplicial object}
\w{\Ad} in $\C$ is a functor \w[.]{\Delta\op\to\C} Write \w{c\C=\C\sp{\Delta}} for
the category of cosimplicial objects in $\C$, and \w{s\C=\C\sp{\Delta\op}} for
that of simplicial objects. There is a natural embedding \w{\cu{-}:\C\to c\C}
(the constant cosimplicial object), and similarly \w[.]{\cd{-}:\C\to s\C}

If \w{\Delta\sb{+}} denotes the subcategory of injective
maps in $\Delta$, a functor \w{\Delta\sb{+}\to\C} will be called a
\emph{restricted cosimplicial object}.

A \emph{chain complex} in a pointed category $\C$ is a sequence of maps
\w{\partial\sb{n}:A\sb{n}\to A\sb{n-1}} with
\w{\partial\sb{n}\circ\partial\sb{n+1}=0} for each \w[.]{n\geq 1} The category of
such will be denoted by \w[.]{\Ch\sb{\C}} The category of \emph{cochain complexes}
in $\C$, defined dually, is denoted by \w[.]{\Ch\sp{\C}}

The category of simplicial sets will be denoted by
\w[,]{\Ss=s\Set} and that of pointed simplicial sets (here called simply ``spaces'')
by \w[.]{\Sa=s\Seta}
Write \w{\map\sb{\C}(\bX,\bY)} for the standard function complex in a simplicial
model category $\C$  (see \cite[I, \S 1.5]{GJarS}).

The \emph{half-smash} of $\bX$ and $\bY$, where \w{(\bY,y)\in\Sa} is pointed, but
\w{\bX\in\Ss} is not, is denoted by
\w[.]{\hfsm{\bX}{\bY}:=(\bX\times\bY)/(\bX\times\{y\})\in\Sa}
In particular, the (unreduced) \emph{cone} on $\bX$ is \w[,]{C\bX:=\hfsm{\bX}{[0,1]}}
where the interval \w{[0,1]} has base point $1$, while the \emph{reduced cone}
on $\bY$ is \w[.]{\bar{C}\bY:=\sms{\bY}{[0,1]}}
\end{notation}

\begin{ack}
The research of the first author was partially supported by NBHM project 
3743, that of the second author by Israel Science
Foundation grant 770/16, and that of the third author by INSPIRE grant No.~IFA MA-12.
\end{ack}

%
%
\sect{Background}
\label{cback}

We first recall some background material on cohomology algebras, their
resolutions, and the realizations of these resolutions.

\begin{defn}\label{drth}
For any ring $R$ and limit cardinal $\lambda$, let \w{\TR=\TRl} denote (a skeleton
of) the full subcategory of \w{\ho\Sa} spanned by finite products of objects of
the form \w[,]{\{\KP{V}{n}\,:\,n\in\NN\sb{>0}\}} where $V$ is a an $R$-module
generated by a set of cardinality $<\lambda$,
This is a (multi-sorted) \emph{theory}, in the sense of Lawvere
(see \cite{LawvF} and \cite{EhreET}). A product-preserving functor
\w{\Gamma:\TR\to\Seta} will be called a \emph{\TRal} (cf.\ \cite[\S 5.6]{BorcH2}).
Since each \w{\bB\in\TR} is an $R$-module object, all \TRal[s] take values in
$R$-modules, and their category will be denoted by \w[.]{\RAlg}

In particular, a \TRal $\Gamma$ is \emph{realizable} if it is represented by a space
\w[,]{\bY\in\Sa} with \w[.]{\Gamma\lin{\KR{n}}:=[\bY,\,\KR{n}]}
By abuse of notation, we denote such a $\Gamma$ by \w[.]{\HiR{\bY}}
Thus \w{\HiR{\bY}} is just the $R$-cohomology algebra of $\bY$, equipped with
the action of the primary $R$-cohomology operations.

A \TRal of the form \w{\HiR{\bB}} for \w{\bB\in\TRl} is called
\emph{free}.  Note that this definition depends on our choice of cardinal
$\lambda$ (cf.\ \cite[\S 1.25]{BSenH}).
\end{defn}

\begin{example}\label{egrtal}
If \w{\lambda=\aleph\sb{0}} and \w[,]{R=\Fp} \w{\TR} consists of finite
$R$-GEMs, and a \TRal is an unstable algebra over the mod $p$ Steenrod algebra,
as in \cite[\S 1.4]{SchwU}. When \w[,]{R=\QQ} a \TRal is just a
graded-commutative $\QQ$-algebra.
\end{example}

\begin{mysubsection}{Algebraic resolutions}\label{sares}
As in \cite[II, \S 4]{QuiH}, there is a model category structure on the category
\w{s\RAlg} of simplicial \TRal[s,] so there is a notion of a free simplicial
resolution \w{\Vd} of a \TRal $\Gamma$.

We shall be interested in a particular kind, known as a \emph{CW-resolution}
(cf.\ \cite[\S 3.10]{BlaCW}), defined as follows
\end{mysubsection}

\begin{defn}\label{dmoore}
Recall that for any simplicial object \w{\Vd} over a complete
pointed category $\M$, the $n$-th \emph{Moore chains} object \wb{n\geq 0} is
$$
C\sb{n}\Vd~:=~\bigcap\sb{i=1}\sp{k}\Ker(d\sb{i})
$$
\noindent with differential \w{\partial\sb{n}:=d\sb{0}} satisfying
\w[.]{\partial\sb{n}\circ\partial\sb{n+1}=0} The $n$-th \emph{Moore cycles}
object is \w[.]{Z\sb{n}\Vd:=\Ker(\partial\sb{n})}

If $\M$ is cocomplete, the $n$-th \emph{latching object} \w{L\sb{n}\Vd} is
defined to be \w{\colim\sb{\theta:\bk\to\bn}\,V\sb{k}} (cf.\ \S \ref{snac}),
equipped with the obvious canonical map \w{\delta:V\sb{n}\to L\sb{n}\Vd}
(see \cite[VII, \S 1]{GJarS}).
\end{defn}

\begin{defn}\label{dcwres}
We say that \w{\Vd\in s\M} is a \emph{CW object} if it is equipped with a
\emph{CW basis} \w{(\oV{n})\sb{n=0}\sp{\infty}} in $\M$ such that
\w[,]{V\sb{n}=\oV{n}\amalg L\sb{n}\Vd} and \w{d\sb{i}\rest{\oV{n}}=0}
for \w[.]{1\leq i\leq n} In this case
\w{\odz{V\sb{n}}:=d\sb{0}\rest{\oV{n}}:\oV{n}\to V\sb{n-1}} is called the
attaching map for \w[.]{\oV{n}} By the simplicial identities \w{\odz{V\sb{n}}}
factors as \w[.]{\odz{V\sb{n}}:\oV{n}\to Z\sb{n-1}\Vd\subset V\sb{n-1}}

In this case we have an explicit description
\begin{myeq}\label{eqslatch}
L\sb{n}\Vd~:=~
\coprod\sb{0\leq k\leq n}~\coprod\sb{0\leq i\sb{1}<\dotsc<i\sb{n-k-1}\leq n-1}~
\oV{k}
\end{myeq}
\noindent for its $n$-th latching object, in which the iterated degeneracy map
\w[,]{s\sb{i\sb{n-k-1}}\dotsc s\sb{i\sb{2}}s\sb{i\sb{1}}} restricted to the
basis \w[,]{\oV{k}} is the inclusion into the copy of \w{\oV{k}} indexed by
\w[.]{(i\sb{1},\dotsc,i\sb{n-k-1})}

In particular, if in \w{\M=\RAlg} we set \w{Z\sb{-1}\Vd:=\Gamma\in\M} and require
that \w{\oV{n}} be free and that \w{\odz{V\sb{n}}:\oV{n}\epic Z\sb{n-1}\Vd}
be surjective for each \w[,]{n\geq 0} we call the resulting augmented free
simplicial \TRal \w{\Vd\to\Gamma} a \emph{CW resolution}.
\end{defn}

\begin{defn}\label{dmoorc}
Dually, for a cosimplicial object \w{\Vu} over a cocomplete
pointed category $\M$ and \w[,]{n\geq 0} the $n$-th \emph{Moore cochains}
object is
$$
C\sp{n}\Vu~:=~\Cof\left(\coprod\sb{i=1}\sp{n-1}\,V\sp{n-1}~\xra{\bot\sb{i}\,d\sp{i}}~
V\sp{n-1}\right)~,
$$
\noindent with differential \w{\delta\sp{n-1}:C\sp{n-1}\Vu\to C\sp{n}\Vu}
induced by \w[,]{d\sp{0}\sb{n-1}} and structure map
\w[.]{v\sp{n}:V\sp{n}\to C\sp{n}\Vu} We denote the cofiber of
\w{\delta\sp{n-1}} by \w[,]{Z\sp{n}\Vu}
with structure map \w[.]{w\sp{n}:C\sp{n}\Vu\to Z\sp{n}\Vu}

If $\M$ is complete, the $n$-th \emph{matching object} for \w{\Vu} is
\begin{myeq}\label{eqmatch}
M\sp{n}\Vu~:=~\lim\sb{\phi:\bn\to \bk}\,V\sp{k}~,
\end{myeq}
\noindent where $\phi$ ranges over the surjective maps \w{\bn\to\bk} in
$\Delta$, with the obvious natural map \w{\zeta\sp{n}:V\sp{n}\to M\sp{n-1}\Vu}
(through which all codegeneracies factor).
If $\M$ is a model category, we say \w{\Vu} is \emph{(Reedy) fibrant} if each
map \w{\zeta\sp{n}} is a fibration (see \cite[X, \S 4]{BKanH}).
\end{defn}

\begin{mysubsection}{Cosimplicial resolutions}\label{scosimp}
Let \w{\Wu} be a weak $R$-resolution of $\bY$ (see \cite[\S 6.1]{BousC}) \wh that
is, a cosimplicial space with each \w{\bW\sp{n}} an $R$-GEM,
equipped with a coaugmentation \w{\vare:\bY\to\Wu} which is an $R$-equivalence
(cf.\ \cite[\S 3.2]{BousC}). We assume for simplicity that \w{\Wu} is Reedy
cofibrant(see \cite[\S 15.3]{PHirM}, so \w[,]{\Tot\Wu\simeq\hY} the $R$-completion
of $\bY$, by \cite[Theorem 6.5]{BousC}.
Such a resolution may be constructed functorially using a suitable monad, as in
\cite[I, \S 2]{BKanH} (see also \cite[\S 2-3]{BSenM})
\end{mysubsection}

%
%
\sect{Realizing CW resolutions}
\label{crcwr}

Our main technical tool in this paper is the construction of appropriate
cosimplicial resolutions of an ($R$-good) space $\bY$, realizing a given
algebraic resolution of \w[.]{\HsR{\bY}} These are the unstable analogue of
the Adams resolution of a space or spectrum (see, e.g., \cite[\S 2.2]{RavC}).

\begin{mysubsect}{Cosimplicial CW resolutions}\label{srcwr}

In \cite[\S 2]{BSenH} we showed that, given a space $\bY$ with
\w[,]{\Gamma:=\HsR{\bY}} any CW resolution \w{\Vd\in s\RAlg} of $\Gamma$
with CW basis \w{(\oV{n})\sb{n=0}\sp{\infty}} (\S \ref{dcwres}) can be realized
by a coaugmented cosimplicial space \w[.]{\bY\to\Wu} This \w{\Wu} is the limit of
a tower of Reedy fibrant and cofibrant $\bY$-coaugmented cosimplicial spaces:
\begin{myeq}\label{eqtower}
\dotsc~\to~\W{n}~\xra{\prn{n}}~\W{n-1}~\xra{\prn{n-1}}~\W{n-2}~\to~\dotsc~
\to~\W{0}~,
\end{myeq}
\noindent in \w[,]{c\Sa=\Sa\sp{\Delta}} with each \w{\prn{n}} a Reedy fibration.

The passage from \w{\W{n-1}} to \w{\W{n}} is as follows:

\begin{enumerate}
\renewcommand{\labelenumi}{(\alph{enumi})~}
\item Choose an $R$-GEM \w{\oW{n}} realizing the free \TRal \w{\oV{n}} (this
is possible because of our choice of $\lambda$ in \S \ref{drth}).
\item The $n$-th attaching map \w{\odz{}:\oV{n}\to C\sb{n-1}\Vu}
defines a unique map \w{\phi:\oV{n}\otimes S\sp{n-1}\to C\sb{\ast}\sk{n-1}\Vd}
of chain complexes in \w[,]{\RAlg}  where \w{\oV{}\otimes S\sp{n-1}} is the
chain complex with $\oV{}$ in dimension \w[,]{n-1} and $0$ elsewhere.

Evidently, one can realize \w{\oV{}\otimes S\sp{n-1}} by a cochain complex in
\w[;]{\Sa} we choose a realization \w{\Ds} which is a Reedy
fibrant cochain complex in \w{\Sa} in the sense of \cite[\S 2.4(i)]{BSenH},
by setting
\begin{myeq}\label{eqdk}
D\sp{k}~:=~P\Omega\sp{n-k-2}\oW{n}
\end{myeq}
\noindent for each \w[,]{k\geq 0}
where \w{P\Omega\sp{-1}\oW{n}:=\oW{n}} and \w{P\Omega\sp{k}\oW{n}:=\ast}
for \w[.]{k<-1} The differential is \w[,]{\iota\circ p} where \w{p:P\bX\to\bX}
is the appropriate path fibration and \w{\iota:\Omega\bX\to P\bX} is the inclusion.
\item Note that the Moore cochains define a functor
\w{C\sp{\ast}:\Sa\sp{\Delta\sb{+}}\to\Ch\sp{\Sa}} into the category of cochain
complexes of spaces (\S \ref{snac}), with right adjoint $\E$, so if we can realize
$\phi$ by a cochain map \w{\Fpp:C\sp{\ast}\W{n-1}\to\Ds} (see Proposition
\ref{pvanish} below), it induces \w{\widetilde{F}:\U\W{n-1}\to\E\Ds} (where
\w{\U:\Sa\sp{\Delta}\to\Sa\sp{\Delta\sb{+}}} is the forgetful functor \wh
see \S \ref{snac})

Taking the cofiber of $\widetilde{F}$ in \w{\Sa\sp{\Delta\sb{+}}} yields
a restricted cosimplicial space \w{\tWu{n}} with
\begin{myeq}\label{eqdopb}
\tWn{r}{n}~=~\Wn{r}{n-1}\times P\Omega\sp{n-r-1}\oW{n}~.
\end{myeq}
\noindent
\item We add the missing codegeneracies form a full cosimplicial space
\w[,]{\hWu{n}} as follows: Set
\w[,]{M\sp{r}\hWu{n}~:=~M\sp{r}\W{n-1}~\times\hM{r}{n}} where
\begin{myeq}\label{eqsmatch}
\hM{r}{n}~:=~\prod\sb{0\leq k\leq r}\ \prod\sb{0\leq i\sb{1}<\dotsc<i\sb{k}\leq r}
P\Omega\sp{n+k-r-1}\oW{n}
\end{myeq}
\noindent for each \w[.]{r\geq 0}  We then set
\begin{myeq}\label{eqcindstep}
\begin{split}
\hWn{r}{n}~:=&~\tWn{r}{n}~\times~\hM{r-1}{n}~=~
\Wn{r}{n-1}~\times~\hM{r-1}{n}~\times~P\Omega\sp{n-r-1}\oW{n}\\
~=&~\Wn{r}{n-1}~\times
\prod\sb{0\leq k\leq r}\ \prod\sb{0\leq i\sb{1}<\dotsc<i\sb{k}\leq r-1}
P\Omega\sp{n+k-r}\oW{n}~,
\end{split}
\end{myeq}
\noindent and the codegeneracy map \w{s\sp{t}:\hWn{r+1}{n}\to\hWn{r}{n}} is defined
into the factor \w{Q:=P\Omega\sp{n+k-r-1}\oW{n}} of \w{\hWn{r}{n}} indexed by
the $k$-tuple \w{I=(i\sb{1},\dotsc,i\sb{k})} by projecting \w{\hWn{r+1}{n}} onto
the copy of $Q$ indexed by the unique
\wwb{k+1}tuple \w{J=(j\sb{1},\dotsc,j\sb{k+1})} satisfying the cosimplicial
identity \w[.]{s\sp{I}\circ s\sp{t}=s\sp{J}}

This defines a functor \w{\F:\Sa\sp{\Delta\sb{+}}\to\Sa\sp{\Delta}} (``add
codegeneracies''), with \w[,]{\F(\tWu{n}):=\hWu{n}}
left adjoint to \w[.]{\U:\Sa\sp{\Delta}\to\Sa\sp{\Delta\sb{+}}}
By adjunction we therefore have a map
\begin{myeq}\label{eqfwd}
\bF\bp{n-1}~:~\W{n-1}~\to~\F\E\Ds~=:~\Du{n}
\end{myeq}
\noindent determined by \w{\Fpp:C\sp{\ast}\W{n-1}\to\Ds} and the codegeneracies.

Note that (assuming the objects \w{\oW{n}} are all fibrant) the
cosimplicial space \w{\hWu{n}} we have constructed is Reedy fibrant, and
from \wref{eqcindstep} we see that the dimensionwise projection defines a
Reedy fibration \w[.]{r\bp{n}:\hWu{n}\to\W{n-1}}
We write \w{i\bp{n}:\sDu{n}\hra\hWu{n}} for the inclusion of the fiber
\w{\sDu{n}:=\F\E\tS\Ds} of \w[.]{r\bp{n}}

Here \w{\tS\Ds} is the obvious Reedy fibrant cochain complex in \w{\Sa} realizing
\w[.]{\oV{}\otimes S\sp{n}}  Note that the unique non-zero coface map into
the non-codegenerate part \w{P\Omega\sp{n-k-1}\oW{n}} of \w{\sD{k}{n}} is
\w[,]{d\sp{1}} not \w[.]{d\sp{0}}
\item Finally, we factor \w{\ast\to\hWu{n}} as a cofibration followed by trivial
(Reedy) fibration \w[,]{q\bp{n}:\W{n}~\xepic{\simeq}~\hWu{n}} so
\w{\prn{n}:=r\bp{n}\circ q\bp{n}} is the required Reedy fibration of
\wref[.]{eqtower}
\end{enumerate}
\end{mysubsect}

\begin{remark}\label{rinind}
In step (b), we construct the map \w{\Fpp:C\sp{\ast}\W{n-1}\to\Ds}
by a downward induction on the dimension \w[,]{k\leq n-1} starting with
\w[,]{\Fpk{n-1}:C\sp{n-1}\W{n-1}\to\cD\sp{n-1}=\oW{n}} which exists by
\cite[Lemma 2.19]{BSenH}.

At the $k$-th stage in the induction we have \w{\Fpk{k+1}} and \w{\Fpk{k}}
in the following diagram:
\mydiagram[\label{eqcochainmap}]{
C\sp{k+1}\W{n-1}\ar[rrr]\sp{\Fpk{k+1}} &&& P\Omega\sp{n-k-3}\oW{n} & =~\cD\sp{k+1} \\
&&& \Omega\sp{n-k-2}\oW{n} \ar@{^{(}->}[u] & \\
C\sp{k}\W{n-1} \ar[uu]\sp{\delta\sp{k}} \ar[rrr]\sp{\Fpk{k}} &&&
P\Omega\sp{n-k-2}\oW{n} \ar@{->>}[u] &
=~\cD\sp{k} \ar[uu]\sb{\delta\sp{k}\sb{\cD}} \\
&&& \Omega\sp{n-k-1}\oW{n} \ar@{^{(}->}[u] \ar@/_{3.9pc}/[uu]\sb{0} & \\
C\sp{k-1}\W{n-1}
\ar@{-->}[rrru]\sp{a\sp{k-1}} \ar[uu]\sp{\delta\sp{k-1}}
\ar@{.>}[rrr]_(0.5){\Fpk{k-1}} &&& P\Omega\sp{n-k-1}\oW{n} \ar@{->>}[u] &
=~~\cD\sp{k-1} \ar[uu]\sb{\delta\sp{k-1}\sb{\cD}}
}
\noindent We see that \w{\Fpk{k}} induces a map \w{a\sp{k-1}} as indicated, which
must be nullhomotopic in order for \w{\Fpk{k-1}} to exist. In fact, we have:
\end{remark}

\begin{prop}\label{pvanish}
Let \w{R=\Fp} or $\QQ$ and \w[,]{\Gamma=\HiR{\bY}} and let \w{\Vd\to\Gamma}
be a CW resolution. Assume we have an \wwb{n-1}stage
coaugmented realization \w{\bY\to\W{n-1}} of \w{\Vd} as above, with
\w{\Ds} a Reedy fibrant cochain complex, as well as a cochain map
\w{\Fpp:C\sp{\ast}\W{n-1}\to\Ds} as in
\wref[,]{eqcochainmap} defined in degrees $\geq k$. Then one can modify the choice
of \w{\Fpk{k}} so that \w{a\sp{k-1}} defined as above is nullhomotopic.
\end{prop}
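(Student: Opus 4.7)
The plan is to translate the nullhomotopy of $a\sp{k-1}$ into a homological statement which, via the identification of $\W{n-1}$ with the algebraic skeleton $\sk{n-1}\Vd$, follows from the acyclicity of $\Vd$ as a CW resolution.

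I would proceed in three steps. First, I set up the chain complex $C\sb{\bullet} := [C\sp{\bullet}\W{n-1},\,\Omega\sp{n-k-1}\oW{n}]$ with differential $f \mapsto f \circ \delta\sp{\bullet - 1}$, and verify that $a\sp{k-1}$ is a cycle: from the relation $\iota \circ a\sp{k-1} = \Fpk{k} \circ \delta\sp{k-1}$ in \wref{eqcochainmap} together with $\delta\sp{k-1} \circ \delta\sp{k-2} = 0$, one gets $\iota \circ a\sp{k-1} \circ \delta\sp{k-2} = 0$, hence the cycle condition by injectivity of the fiber inclusion $\iota$. Second, I analyze admissible modifications of $\Fpk{k}$: preserving the cochain-map relation $\delta\sp{k}\sb{\cD} \circ \Fpk{k} = \Fpk{k+1} \circ \delta\sp{k}$ forces any modification to take the form $\Fpk{k} \mapsto \Fpk{k} + \iota \circ h'$ for some $h' : C\sp{k}\W{n-1} \to \Omega\sp{n-k-1}\oW{n}$, and a direct computation shows it replaces $a\sp{k-1}$ by $a\sp{k-1} + h' \circ \delta\sp{k-1}$. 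Hence the proposition reduces to showing $[a\sp{k-1}] = 0$ in $H\sb{k-1}(C\sb{\bullet})$.

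Third, I identify $H\sb{k-1}(C\sb{\bullet})$ algebraically and show it vanishes. Since $\oW{n}$ is an $R$-GEM realizing the free $\TR$-algebra $\oV{n}$, representability combined with the identification $\HiR{\W{n-1}} \cong \sk{n-1}\Vd$ from Section \ref{crcwr} (see \cite[\S 2]{BSenH}) presents $C\sb{\bullet}$ as a shifted $\Hom$-complex $\Hom\sb{\RAlg}(\oV{n}[n-k-1],\,C\sb{\bullet}\sk{n-1}\Vd)$. Projectivity of the free $\TR$-algebra $\oV{n}$ makes $\Hom\sb{\RAlg}(\oV{n}[n-k-1],\,-)$ exact on positive-degree homology, giving $H\sb{k-1}(C\sb{\bullet}) \cong \Hom\sb{\RAlg}(\oV{n}[n-k-1],\,H\sb{k-1}\sk{n-1}\Vd) = 0$ for $1 \leq k-1 \leq n-2$ by acyclicity of $\Vd$; the extreme case $k=1$ is handled separately, using the shift $\Omega\sp{n-2}\oW{n}$ together with the coaugmentation $\bY \to \W{n-1}$ to force the relevant $\Hom$-group into $\Gamma$ to vanish.

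The principal technical obstacle will be the identification in the third step: carefully matching the topological differential on mapping spaces into an $R$-GEM with the algebraic Moore-chain differential on $\sk{n-1}\Vd$, tracking all degrees, shifts and signs so that the comparison respects differentials. It is precisely here that the hypothesis $R = \Fp$ or $\QQ$ enters, through the representability properties of free $\TR$-algebras by $R$-GEMs.
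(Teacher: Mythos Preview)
The paper's own proof is a one-line citation: ``This follows from the proof of \cite[Theorem A.11]{BSenH}.'' There is thus no in-paper argument to compare against; your proposal amounts to an independent reconstruction of that external result. Your Steps 1 and 2 are correct and cleanly stated. Step 3 has the right strategy --- reduce to acyclicity of the CW resolution $\Vd$ --- but two points need tightening before it is a proof. First, identifying $[C\sp{j}\W{n-1},\,\Omega\sp{n-k-1}\oW{n}]$ with an algebraic $\Hom$ group is not immediate: the Moore cochain $C\sp{j}\W{n-1}$ is a cofiber in $\Sa$, not itself an $R$-GEM, so you must run the defining cofiber sequence through the cohomology functor and check that the long exact sequence yields $\HiR{C\sp{j}\W{n-1}}\cong C\sb{j}(\sk{n-1}\Vd)$ in the relevant range (this uses that each $\Wn{j}{n-1}$ is weakly equivalent to a product of $R$-GEMs and their path spaces). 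Second, $\RAlg$ is not abelian (for $R=\Fp$ it is unstable algebras over the Steenrod algebra), so invoking ``projectivity'' and ``exactness'' of $\Hom\sb{\RAlg}(\oV{n}[\cdot],-)$ is shorthand that must be cashed out via the universal property of free $\TR$-algebras. Finally, your boundary case $k=1$ is too compressed: the clean fix is to extend $C\sb{\bullet}$ to degree $-1$ using the coaugmentation $\bY\to\W{n-1}$, verify that $a\sp{0}$ remains a cycle there, and appeal to acyclicity of the augmented resolution $\Vd\to\Gamma$. None of these gaps is fatal, and your outline is plausibly close to what the cited argument in \cite{BSenH} actually does.
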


\begin{proof}
This follows from the proof of \cite[Theorem A.11]{BSenH}.
\end{proof}

\begin{corollary}\label{creal}
For \w[,]{\Gamma=\HiR{\bY}} any CW resolution \w{\Vd\to\Gamma} as above
is realizable by a coaugmented cosimplicial space \w{\bY\to \Wu} obtained as a limit
of a tower \wref{eqtower} as above.
\end{corollary}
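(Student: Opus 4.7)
The plan is to build the tower \eqref{eqtower} by induction on $n$, using the construction of \S \ref{srcwr} at each stage, and to invoke Proposition \ref{pvanish} to guarantee that the inductive step goes through. The coaugmented cosimplicial space $\Wu$ is then obtained as the limit of the resulting tower of Reedy fibrations in $c\Sa$.

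For the base case, choose an $R$-GEM $\oW{0}$ realizing the free \TRal $\oV{0}=V\sb{0}$; since $\odz{V\sb{0}}:\oV{0}\epic\Gamma=\HiR{\bY}$ is surjective and $\bY$ is $R$-good, the dual map can be lifted to a map $\bY\to\oW{0}$ whose induced map on cohomology is $\odz{V\sb{0}}$. Taking $\W{0}:=\cu{\oW{0}}$ (suitably Reedy cofibrantly replaced under $\bY$) gives the initial stage.

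For the inductive step, assume $\bY\to\W{n-1}$ realizes the \wwb{n-1}skeleton $\sk{n-1}\Vd$, with $\W{n-1}$ Reedy fibrant and cofibrant. Choose $\oW{n}$ realizing $\oV{n}$ as in step (a) of \S \ref{srcwr}, and form the Reedy fibrant target cochain complex $\Ds$ as in \eqref{eqdk}. The attaching map $\odz{V\sb{n}}:\oV{n}\to Z\sb{n-1}\Vd$, viewed algebraically, gives a chain map $\phi:\oV{n}\otimes S\sp{n-1}\to C\sb{\ast}\sk{n-1}\Vd$. What must be done is to realize $\phi$ by an actual cochain map $\Fpp:C\sp{\ast}\W{n-1}\to\Ds$ in $\Ch\sp{\Sa}$; this is the main obstacle, and it is precisely the content of Proposition \ref{pvanish}: working down from $k=n-1$ (where $\Fpk{n-1}$ exists by \cite[Lemma 2.19]{BSenH}), at each stage the obstruction to constructing $\Fpk{k-1}$ compatibly is the nullhomotopy of the map $a\sp{k-1}$ in diagram \eqref{eqcochainmap}, which we can arrange after modifying $\Fpk{k}$ within its homotopy class, by Proposition \ref{pvanish}.

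Once $\Fpp$ is in hand, apply steps (b)--(e) of \S \ref{srcwr} mechanically: form $\widetilde{F}:\U\W{n-1}\to\E\Ds$ by adjunction, take its cofiber $\tWu{n}$ in $\Sa\sp{\Delta\sb{+}}$ to get \eqref{eqdopb}, add codegeneracies via the left adjoint $\F$ to obtain the Reedy fibrant cosimplicial space $\hWu{n}$ of \eqref{eqcindstep}, and factor $\ast\to\hWu{n}$ as a cofibration followed by a trivial Reedy fibration $q\bp{n}:\W{n}\xepic{\simeq}\hWu{n}$ to obtain a Reedy fibrant and cofibrant $\W{n}$ together with the Reedy fibration $\prn{n}=r\bp{n}\circ q\bp{n}:\W{n}\to\W{n-1}$. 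A direct inspection of \eqref{eqcindstep} (which shows that the fiber $\sDu{n}$ of $r\bp{n}$ contributes exactly the free \TRal $\oV{n}$ in cohomology, with attaching map induced by $\Fpp$) confirms that $\W{n}$ realizes $\sk{n}\Vd$ compatibly under $\bY$.

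Finally, set $\Wu:=\lim\sb{n}\W{n}$ in $c\Sa$. Since the tower consists of Reedy fibrations between Reedy fibrant objects, $\Wu$ is Reedy fibrant, and the coaugmentations $\bY\to\W{n}$ assemble into a coaugmentation $\bY\to\Wu$. By construction each $\bW\sp{n}$ is a product of the $R$-GEMs $\oW{k}$ as in \eqref{eqcindstep}, so $\Wu$ is a cosimplicial $R$-GEM, and the induced simplicial \TRal $\HiR{\Wu}$ is the original CW resolution $\Vd\to\Gamma$, as required.
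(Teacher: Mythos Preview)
Your proposal is correct and follows exactly the approach the paper intends: the paper gives no separate proof of this corollary, treating it as immediate from Proposition \ref{pvanish} together with the inductive construction already laid out in \S \ref{srcwr}, and your argument is simply a careful unwinding of that implication. One small quibble: after the cofibrant replacement in step (e), the spaces $\Wn{r}{n}$ are only \emph{weakly equivalent} to the explicit products of $R$-GEMs in \eqref{eqcindstep}, not literally equal to them, so your final sentence should be phrased in terms of weak equivalence (which is all that is needed for $\HiR{\Wu}\cong\Vd$).
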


\begin{mysubsection}{Higher cohomology operations}\label{shco}
We think of \w{a\sp{k-1}} as the value of a \wwb{n-k}th order cohomology operation;
\cite[Theorem A.11(b)]{BSenH} then shows that, given $\bY$, this higher order
operation vanishes, so $F$ exists.

However, given another $R$-good space $\bZ$ with
\w[,]{\HsR{\bZ}\cong\HsR{\bY}} we can try to construct a coaugmentation
\w{\bv:\bZ\to\Wu} inducing a weak equivalence to \w[:]{\Tot\Wu\simeq\hY}
this is possible if and only if $\bZ$ and $\bY$ are $R$-equivalent.
This will be carried out by inductively attempting to produce successive
lifts \w[,]{\bve{k}:\bZ\to\W{k}} starting with the obvious
\w[.]{\bve{0}:\bZ\to\W{0}=\cu{\oW{0}}}

Given \w[,]{\bve{n-1}} consider the composite $\xi$ of
\begin{myeq}\label{eqnegone}
\bZ~\xra{\bve{n-1}}~\Wn{0}{n-1}~\xra{\Fpk{0}}~P\Omega\sp{n-2}\oW{n}~
\xra{p}~\Omega\sp{n-2}\oW{n}~\xra{\iota}~P\Omega\sp{n-3}\oW{n}~,
\end{myeq}
\noindent which represents the component of the iterated coface map
\w{d\sp{1}\circ d\sp{0}\circ \bve{n-1}} from $\bZ$ into \w[.]{P\Omega\sp{n-3}\oW{n}}
Since \w{d\sp{1}\circ d\sp{0}\circ \bve{n-1}=d\sp{2}\circ d\sp{1}\circ \bve{n-1}}
and \w{d\sp{2}=0} into the factor \w[,]{P\Omega\sp{n-3}\oW{n}} we see that $\xi$
is zero. Since $\iota$ is monic, this means that \w{p\circ\Fpk{0}\circ\bve{n-1}}
is already zero, so \w{\Fpk{0}\circ\bve{n-1}} factors through the fiber
\w{\Omega\sp{n-1}\oW{n}} of the path fibration $p$. We denote the resulting map by
\w[.]{a\sp{-1}:\bZ\to\Omega\sp{n-1}\oW{n}} This is the obstruction
to lifting \w{\bve{n-1}} to \w{\bve{n}} (see \cite[Lemma 4.5]{BSenH}).

Since all but the first map in \wref{eqnegone} are $R$-GEMs, from the discussion
in \cite[\S 4]{BSenH} we see that \w{[a\sp{-1}]} can indeed be interpreted as
the value of an appropriate $n$-th order cohomology operation. 

Note that in \wref{eqcochainmap} the various spaces \w{C\sp{j}\W{n-1}} are
not $R$-GEMs. Nevertheless, that realization problem which we solve by
the vanishing of the classes \w[,]{a\sp{k-1}} for the chain map 
\w{\phi:\oV{n}\otimes S\sp{n-1}\to C\sb{\ast}\sk{n-1}\Vd} of \S \ref{srcwr}(b),
is equivalent to that of realizing the $n$-skeletal augmented simplicial object
\w{\sk{n}\Vd\to\Gamma} in \w[,]{s\RAlg} which can be thought of as
an $n$-skeletal augmented cosimplicial object in \w[,]{c\ho\Sa} for which indeed all 
but one object are $R$-GEMs.
\end{mysubsection}

\begin{remark}\label{rhcocomp}
Note that because we are mapping into $R$-GEMs, from the universal property
of the $R$-completion we see that the value of \w{[a\sp{-1}]} depends only on the
$R$-type of $\bZ$ (that is, up to zigzags of maps inducing isomorphisms in
\w[).]{\HiR{-}}  In particular, since the spaces in \wref{eqtower} are all
coaugmented out of $\bY$, all these higher operations indeed vanish for $\bY$ \wh
and thus also for its $R$-completion \w[.]{\hY\simeq\Tot\Wu} Thus $\hY$ too is
coaugmented into \wref[,]{eqtower} and thus into \w[.]{\Wu}

This is a somewhat unusual situation, since \w{\Tot\Wu} is a homotopy limit,
and we would not generally expect a map \w{\Deu\to\Wu} to lift through
the natural map \w{\lim\to\holim} to an actual cone for
\w{\Wu} \wwh that is, to a map \w[.]{\ast\to \Wu}
\end{remark}

%
%
\sect{The homotopy spectral sequence of a cosimplicial space}
\label{chsscs}

For any fibrant pointed cosimplicial space \w[,]{\Wu} Bousfield and Kan construct
a spectral sequence as follows:

\begin{mysubsection}{The $\Tot$ tower}\label{stott}
In the version of \cite[X, \S 6]{BKanH}, this is just the homotopy spectral sequence
of the tower of fibrations:
\begin{myeq}\label{eqtott}
\dotsc\Tot\sb{n+1}\Wu\xra{q\sp{n+1}}\Tot\sb{n}\Wu\xra{q\sp{n}}
\Tot\sb{n-1}\Wu\dotsc\to\Tot\sb{-1}\Wu=\ast,
\end{myeq}
\noindent with (homotopy) limit \w[.]{\Tot\Wu}

Recall that \w{\Tot\Wu:=\map\sb{c\Ss}(\Deu,\,\Wu)} (the simplicial enrichment
of \w[),]{c\Ss} where \w{\Deu} is the cosimplicial space with
\w{\Del{k}} (the standard $k$-simplex) in dimension $k$, and similarly
\w[.]{\Tot\sb{n}\Wu:=\map\sb{c\Ss}(\sk{n}\Deu,\,\Wu)}
One should think of a map \w{\bZ\to\Tot\Wu} as an $\infty$-homotopy commutative
diagram mapping $\bZ$ into the $\Delta$-indexed diagram \w[.]{\Wu}

We shall use \w{\Dlt{k}} as an alternative notation for the standard $k$-simplex in
$\Ss$, when we think of as representing $k$-simplices in simplicial sets.
In particular, a $k$-simplex in \w{\Tot\sb{n}\Wu} is a sequence of maps
\w{f\sp{m}:\sk{n}\Del{m}\times\Dlt{k}\to\bW\sp{m}} \wb{m=0,1,\dotsc} such that
\begin{myeq}\label{eqelttot}
f\sp{j}\circ(\sk{n}\!\Del{}(\phi)\times\Id)~=~\bW(\phi)\circ f\sp{m}~:~
\sk{n}\!\Del{m}\times\Dlt{k}\to\bW\sp{j}
\end{myeq}
\noindent for every morphism \w{\phi:[m]\to[j]} in $\Delta$. Therefore,
for each \w{k\geq 1} we have
\w[,]{f\sp{n-k}=s\sb{\bW}\sp{I}\circ f\sp{n}\circ d\sb{\bDel}\sp{I}} where
\w{d\sb{\bDel}\sp{I}=d\sb{\bDel}\sp{i\sb{k}}\circ\dotsc d\sb{\bDel}\sp{i\sb{1}}}
is an iterated coface map of $\bDel$, and \w{s\sp{I}\sb{\bW}} is the corresponding
iterated codegeneracy map of \w{\Wu} (since \w[).]{s\sp{I}\circ d\sp{I}=\Id}
Moreover, since \w{\sk{n}\Del{N}=\colim\sb{i\leq n}\Del{i}} for \w[,]{N>n}
the map \w{f\sp{N}} is determined by the (compatible) maps \w{f\sp{i}} for
\w[.]{i\leq n} Thus
\w{(\Tot\sb{n}\Wu)\sb{k}\subseteq\Hom(\Del{n}\times\Dlt{k},\,\bW\sp{n})} and in fact
\begin{myeq}\label{eqtotn}
\Tot\sb{n}\Wu\subseteq\map\sb{\Ss}(\Del{n},\,\bW\sp{n})~,
\end{myeq}
\noindent where the subspace is the limit given by \wref[.]{eqelttot}

Note that because \w{\sk{n}\Del{n+1}=\partial\Del{n+1}} and each of the coface
maps \w{d\sp{i}:\bW\sp{n}\to\bW\sp{n+1}} has a retraction, the compatibility
conditions mean that also
\begin{myeq}\label{eqtotnp}
\Tot\sb{n}\Wu\subseteq\map\sb{\Ss}(\partial\Del{n+1},\,\bW\sp{n+1})~.
\end{myeq}

As in \cite[X, 6.3]{BKanH}, the map \w{q\sp{n}} of \wref{eqtott}
fits into a fibration sequence
\begin{myeq}\label{eqtotfs}
\Omega\sp{n}N\sp{n}\Wu~\stackrel{\iota\sp{n}}{\hra}~\Tot\sb{n}\Wu~\xepic{q\sp{n}}~
\Tot\sb{n-1}\Wu~,
\end{myeq}
\noindent where
\begin{myeq}\label{eqnwu}
N\sp{n}\Wu~:=~\bW\sp{n}\cap\Ker(s\sp{0})\cap\dotsc\cap\Ker(s\sp{n-1})
\end{myeq}
\noindent (and thus \w[).]{\Omega\sp{n}N\sp{n}\Wu=N\sp{n}\Omega\sp{n}\Wu}

Furthermore, by \wref{eqtotn} and \wref[,]{eqtotnp} the sequence \wref{eqtotfs}
is just the restriction of the fibration sequence:
\begin{myeq}\label{eqbigfs}
\map\sb{\Sa}(\Del{n}/\partial\Del{n},\,\bW\sp{n})~\xra{p\sb{n}\sp{\ast}}~
\map\sb{\Ss}(\Del{n},\bW\sp{n})~\xra{\iota\sb{n}\sp{\ast}}~
\map\sb{\Ss}(\partial\Del{n},\bW\sp{n})
\end{myeq}
\noindent induced by the cofibration sequence
$$
\partial\Del{n}~\xra{\iota\sb{n}}~\Del{n}~\xepic{p\sb{n}}~\Del{n}/\partial\Del{n}~.
$$
\end{mysubsection}

\begin{remark}\label{rhsstow}
Since \w{\Wu} is pointed, \w{\map\sb{\Ss}(\Del{n},\,\bW\sp{n})} has a chosen
basepoint, and an element in \w{\pi\sb{k}\Tot\sb{n}\Wu} is represented by a suitable
pointed map \w[,]{f:\bS{k}\to\map\sb{\Ss}(\Del{n},\,\bW\sp{n})} or by its (pointed)
adjoint \w{\hat{f}:\hfsm{\Del{n}}{\bS{k}}\to\bW\sp{n}} (cf.\ \S \ref{snac}). Note
that the maps into \w{\bW\sp{j}} \wb{0\leq j<n} are encoded by maps into the
appropriate codegeneracies in \w[.]{\bW\sp{n}}

Thus a class
\w{\alpha\in\pi\sb{k}\Omega\sp{n}N\sp{n}\Wu\subseteq\pi\sb{k}\Tot\sb{n}\Wu}
is represented by \w{a:\partial\Del{n+1}\times\Dlt{k}\to\bW\sp{n+1}} which
vanishes on \w[.]{\partial\Del{n+1}\times\partial\Dlt{k}}
Such an $\alpha$ represents an element \w{\gamma\in\pi\sb{k}\Tot\Wu} if and only
if \w{j\sp{n}\sb{\#}(\alpha)} lifts to all levels of \wref[,]{eqtott} where
\w{j\sp{n}\sb{\#}:\pi\sb{k}\Omega\sp{n}N\sp{n}\Wu\to\pi\sb{k}\Tot\sb{n}\Wu}
is induced by the inclusion. The successive obstructions to lifting
\w{j\sp{n}(\alpha)} represent the differentials in the spectral sequence.
\end{remark}

\begin{mysubsection}{The spectral sequence}\label{sss}
The \ww{E\sb{1}}-exact couple of the homotopy spectral sequence for the $\Tot$ tower
\wref{eqtott} may be presented as in Figure \ref{fig1}:

%
%
\begin{figure}[htbp]
\begin{center}
\xymatrix@R=25pt@C=11pt{
\pi\sb{k+1}\Tot\sb{n}\Wu \ar[d]\sp{q\sp{n}} \ar[rr]\sp{\delta\sp{n}} &&
\pi\sb{k}\Omega\sp{n+1}N\sp{n+1}\Wu \ar[rr]\sp{j\sp{n+1}} &&
\pi\sb{k}\Tot\sb{n+1}\Wu \ar[d]\sp{q\sp{n+1}} \ar[rr]\sp(0.45){\delta\sp{n+1}} &&
\pi\sb{k-1}\Omega\sp{n+2}N\sp{n+2}\Wu\\
\pi\sb{k+1}\Tot\sb{n-1}\Wu \ar[rr]\sp{\delta\sp{n-1}} &&
\pi\sb{k}\Omega\sp{n}N\sp{n}\Wu \ar[rr]^{j\sp{n}} &&
\pi\sb{k}\Tot\sb{n}\Wu \ar[rr]\sp(0.45){\delta\sp{n}} && \pi\sb{k-1}\Omega\sp{n+1}N\sp{n+1}\Wu
}
\end{center}
\caption[fig1]{Exact couple for \w{\Tot} tower}
\label{fig1}
\end{figure}
\noindent with \w{E\sp{n,n+k}\sb{1}:=\pi\sb{k}\Omega\sp{n}N\sp{n}\Wu} and
\ww{d\sb{1}}-differential given by
\begin{myeq}\label{eqdone}
d\sb{1}\sp{n,n+k}~=~\delta\sp{n}\circ j\sp{n}~=~
\sum\sb{t=0}\sp{n-1}\,(-1)\sp{t}d\sp{t}\sb{\#}:
\pi\sb{k+n}N\sp{n}\Wu\to\pi\sb{k+n}N\sp{n+1}\Wu
\end{myeq}
\noindent
by \cite[X, 6.3]{BKanH} again.
\end{mysubsection}

%
%
\sect{The unstable Adams spectral sequence}
\label{cuass}

From now on we restrict attention to the case \w{R=\Fp} (although most results are
valid also for \w[).]{R=\QQ}
When \w{\Wu} is an \ww{\Fp}-resolution of a $p$-good space $\bY$, the homotopy
spectral sequence of Section \ref{chsscs} is the unstable Adams spectral sequence
of \cite{BKanH}, converging to \w{\pi\sb{\ast}\hY} (where
\w{\hY:=R\sb{\infty}\bY} is the $p$-completion, equipped with the natural
\ww{\HsR{-}}-equivalence \w[).]{\eta:\bY\to\hY} In this case we can say a little more
about the structure of the spectral sequence:

\begin{mysubsection}{Using the CW structure}\label{sucws}
From now on, we assume that \w{\Wu} has been constructed as in \S \ref{srcwr}
to realize a given CW resolution \w{\Vd} of \w[.]{\Gamma=\HsR{\bY}}
From \wref{eqdopb} and \wref{eqsmatch} we see that the
maps \w{\prn{n+1}:\W{n+1}\to\W{n}} in \wref{eqtower} induce weak equivalences in
\w{\Tot\sb{k}} for all \w[.]{0\leq k\leq n}
Since \w{\W{n}} is $n$-coskeletal, we have a tower of fibrations:
\begin{myeq}\label{eqmodtott}
\dotsc\Tot\sb{n+1}\W{n+1}\xra{(\prn{n+1})\sb{\ast}}
\Tot\sb{n}\W{n}\xra{(\prn{n})\sb{\ast}}\dotsc
\Tot\sb{1}\W{1}\xra{(\prn{1})\sb{\ast}}\Tot\sb{0}\W{0}~,
\end{myeq}
\noindent obtained by combining \wref{eqtott} and \wref[.]{eqtower}
\end{mysubsection}

In order to better understand the tower \wref[,]{eqmodtott} we recall a
(somewhat simplified) version of a construction introduced in \cite[\S 5.10]{BSenH}:

\begin{defn}\label{dfoldp}
For each \w{n\geq 1} and \w[,]{1\leq k\leq n+1} the $n$-th \emph{folding polytope}
\w{\PP{n}{k}} is obtained from a union of $k$ disjoint $n$-simplices
\w{\Dels{n}{n-k+1},\dotsc,\Dels{n}{n}} by identifying the $j$-th facets
of \w{\Dels{n}{n-j}} and \w{\Dels{n}{n-j-1}} for each
\w[.]{0\leq j\leq n} See Figure \ref{fig2} below for an example.
\end{defn}

\begin{remark}\label{rfoldp}
By induction on \w{1\leq k\leq n} we readily see that \w{\PP{n}{k}} is
PL-equivalent to an $n$-ball, so its boundary \w{\partial\PP{n}{k}} is PL-equivalent
to an \wwb{n-1}-sphere.
\end{remark}

\begin{lemma}\label{ldtot}
For \w[,]{\Wu} \w[,]{\Du{n}} and \w{\sDu{n}} as in \S \ref{srcwr},
\w{\Tot\Du{n}\simeq\Omega\sp{n-1}\oW{n}} and
\w[.]{\Tot\sDu{n}\simeq\Omega\sp{n}\oW{n}}
\end{lemma}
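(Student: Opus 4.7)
The plan is to compute both totalizations directly from the $\Tot$ tower \eqref{eqtott}, whose layerwise fibers \eqref{eqtotfs} involve the normalized objects $\Omega\sp{r}N\sp{r}$. The key reduction is to observe that for a cosimplicial space of the form $\F\E\Cu$, the normalized cochain $N\sp{r}$ agrees up to weak equivalence with the Moore cochain $C\sp{r}$; since the counit of the $(C\sp{\ast},\E)$-adjunction recovers $\Cu$, this identifies
$$
N\sp{r}\Du{n}\ \simeq\ D\sp{r}\ =\ P\Omega\sp{n-r-2}\oW{n}
\quad\text{and}\quad
N\sp{r}\sDu{n}\ \simeq\ (\tS\Ds)\sp{r}\ =\ P\Omega\sp{n-r-1}\oW{n}.
$$

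To justify this identification, I would use that $\F$ adds only codegeneracies while preserving the coface maps, so $C\sp{r}\F Y = C\sp{r} Y$ for $Y\in\Sa\sp{\Delta\sb{+}}$, and then invoke the Reedy fibrancy of $\F\E\Ds$ (which holds once $\oW{n}$ is chosen fibrant, as can be read off directly from \eqref{eqdopb} and \eqref{eqcindstep}) to deduce the equivalence $N\sp{r}\simeq C\sp{r}$. Concretely, step (d) of \S\ref{srcwr} describes each codegeneracy $s\sp{t}$ as a projection onto a specific matching-object factor in the product description of $\hM{r-1}{n}$ in \eqref{eqsmatch}, so the intersection of the kernels of all the $s\sp{t}$ cuts out precisely the non-matching factor $P\Omega\sp{n-r-2}\oW{n}$ for $\Du{n}$ (respectively $P\Omega\sp{n-r-1}\oW{n}$ for $\sDu{n}$).

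Next I would invoke the conventions of \S\ref{srcwr}(b): for $k\geq 0$ the path space $P\Omega\sp{k}\oW{n}$ is contractible, for $k=-1$ it equals $\oW{n}$, and for $k<-1$ it is trivial. Consequently $N\sp{r}\Du{n}$ is contractible for every $r\neq n-1$ while $N\sp{n-1}\Du{n}\simeq\oW{n}$; analogously $N\sp{r}\sDu{n}$ is contractible for $r\neq n$ while $N\sp{n}\sDu{n}\simeq\oW{n}$. Feeding this back into the $\Tot$ tower, each fibration $\Tot\sb{r}\Du{n}\to\Tot\sb{r-1}\Du{n}$ with $r\neq n-1$ has contractible fiber $\Omega\sp{r}N\sp{r}\Du{n}$, so the tower is contractible up through stage $n-2$, jumps at stage $n-1$ to give $\Tot\sb{n-1}\Du{n}\simeq\Omega\sp{n-1}\oW{n}$, and then stabilizes. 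Taking the limit yields $\Tot\Du{n}\simeq\Omega\sp{n-1}\oW{n}$, and the same argument shifted up by one degree produces $\Tot\sDu{n}\simeq\Omega\sp{n}\oW{n}$.

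The main obstacle is the normalized-vs-Moore-cochain identification, which requires carefully tracking the indexing of the matching factors in \eqref{eqsmatch} and verifying that the codegeneracies $s\sp{t}$ of step (d) of \S\ref{srcwr} really do intersect to kill exactly the matching contribution, leaving the correct non-degenerate factor. Once this combinatorial bookkeeping is in hand the spectral-sequence collapse and the tower computation are routine.
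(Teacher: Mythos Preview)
Your argument is correct, but it follows a genuinely different route from the paper's.  You compute $\Tot$ by running the $\Tot$ tower \eqref{eqtott}: after identifying $N\sp{r}\Du{n}$ with the non-degenerate factor $P\Omega\sp{n-r-2}\oW{n}$ (and similarly for $\sDu{n}$), all but one layer of the tower has contractible fiber, so the tower collapses to give the answer.  The paper instead argues pointwise: it unpacks a map $\bZ\to\Tot\sb{n-1}\Du{n}$ as a compatible family $g\sp{j}:\hfsm{\Del{j}}{\bZ}\to P\Omega\sp{n-j-2}\oW{n}$, passes to adjoints $\tg{j}:\hfsm{\Del{n-1}}{\bZ}\to\oW{n}$ via the cone--path adjunction, and then glues the $\tg{j}$ into a single map out of a \emph{folding polytope} $\PP{n-1}{n}$ (Definition~\ref{dfoldp}), whose quotient by its boundary is a PL $(n-1)$-sphere.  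Your approach is cleaner for the bare statement; the paper's approach is doing more work because the folding-polytope description of the equivalence, and the explicit relations \wref{eqcosimprels} and \wref[,]{eqcosimprelas} are reused verbatim in the proof of Proposition~\ref{pqfib} and then throughout \S\ref{cduass}--\S\ref{cfuass} to build the obstruction maps $\Phi\sb{(F,G)}$ and $\Psi\sb{(F,G)}$.  If you take your route you will need to supply that geometry separately later.

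One small caution: your first justification, ``Reedy fibrancy gives $N\sp{r}\simeq C\sp{r}$,'' is not a general fact for cosimplicial spaces, and the paper's $C\sp{r}$ (a cofiber, Definition~\ref{dmoorc}) is not \emph{a priori} comparable to $N\sp{r}$ (an intersection of kernels, \eqref{eqnwu}).  Fortunately you do not actually need this: your ``concretely'' paragraph computes $N\sp{r}$ directly from the product description \wref{eqcindstep} and the codegeneracies-as-projections of \S\ref{srcwr}(d), which is correct and self-contained.  I would drop the $C\sp{r}$ detour and lead with the direct computation.
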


\begin{proof}
Since \w{\Du{n}} is \wwb{n-1}coskeletal, \w[.]{\Tot\Du{n}=\Tot\sb{n-1}\Du{n}}
Moreover, by \S \ref{stott}, and \S \ref{srcwr}(d), for any \w[,]{\bZ\in\Sa} a
pointed map \w{g:\bZ\to\Tot\sb{n-1}\Du{n}} is completely determined by a sequence
of maps \w{g\sp{j}:\hfsm{\Del{j}}{\bZ}\to P\Omega\sp{n-j-2}\oW{n}}
\wb[,]{0\leq j\leq n-1} making
\mytdiag[\label{eqmaptofib}]{
\hfsm{\Del{j}}{\bZ} \ar[rrr]\sp{g\sp{j}} &&& P\Omega\sp{n-j-2}\oW{n} \\
\hfsm{\Del{j-1}}{\bZ}
\ar@<1.5ex>[u]\sp(0.5){\delta\sp{0}}\sb(0.5){\dotsc}
\ar@<-1.5ex>[u]\sb(0.5){\delta\sp{j}} \ar[rrr]\sp{g\sp{j-1}} &&&
P\Omega\sp{n-j-1}\oW{n} \ar@<2ex>[u]\sp{\iota\circ p=d\sp{0}}\sb{\dotsc}
 \ar@<-2ex>[u]\sb{0=d\sp{i}\ (i\geq 1)}
}

\noindent commute for each \w{0<j\leq n-1} (where the coface maps \w{\delta\sp{i}}
on the left are induced by those of \w[).]{\Deu} See Remark \ref{rhsstow} and
\wref[.]{eqdk}

Note that the cone functor in \w{\Sa} is left adjoint to $P$, so if we include
each \w{P\Omega\sp{i}\oW{n}} into \w[,]{P\sp{i+1}\oW{n}} and identify
\w{C\Del{j}} with \w[,]{\Del{j+1}} we see that for each \w{0\leq j\leq n-1}
the adjoint of \w{g\sp{j}} is a map \w[.]{\tg{j}:\hfsm{\Del{n-1}}{\bZ}\to\oW{n}}
We arrange the adjunction between \w{g\sp{j}} and \w{\tg{j}} in such a way that the
coface maps \w{\delta\sp{0},\dotsc,\delta\sp{n-j-3}} of \w{\Del{n-1}}
correspond to the loop directions of \w{P\Omega\sp{n-j-2}\oW{n}}
(counted outwards from \w[),]{\oW{n}} and \w{\delta\sp{n-j-2}} corresponds to the
path direction. Finally, as long as \w[,]{j>0} the remaining \w{j+1} coface maps into
\w{\Del{n-1}} are the original coface maps of \w[,]{\Del{j}} re-indexed by
\w[.]{n-j-1}

The fact that \wref{eqmaptofib} commutes implies that these adjoints
satisfy the relations
\begin{myeq}\label{eqcosimprels}
\tg{j}\circ\delta\sp{i}~=~\begin{cases}
\widetilde{\iota p g\sp{j}} & \text{for}\hsm i=n-j-2\hsm \text{and}\ j<n-1\\
\widetilde{\iota p g\sp{j-1}} & \text{for}\hsm i=n-j-1\hsm \text{and}\ j>0\\
0  & \text{otherwise.}
\end{cases}
\end{myeq}

By Definition \ref{dfoldp}, the maps \w{\tg{j}} thus induce a
single map \w{\tg{}:\hfsm{\PP{n-1}{n}}{\bZ}\to\oW{n}} Moreover, \wref{eqcosimprels}
also implies that \w[,]{\tg{}\rest{\hfsm{\partial\PP{n-1}{n}}{\bZ}}=0} so $\tg{}$
factors uniquely through a map
\w[.]{\sms{(\PP{n-1}{n}/\partial\PP{n-1}{n})}{\bZ}\to\oW{n}}
By Remark \ref{rfoldp}, \w{\PP{n-1}{n}/\partial\PP{n-1}{n}} is a PL \wwb{n-1}sphere,
so setting \w{\bZ=\bS{i}} we see that
\w{\Tot\Du{n-1}} is weakly equivalent to
\w[.]{\Omega\sp{n-1}\oW{n}}

Similarly, \w{\sDu{n}} is $n$-coskeletal, so \w[,]{\Tot\sDu{n}=\Tot\sb{n}\sDu{n}}
and a map of simplicial sets \w{g:\bZ\to\Tot\sb{n}\sDu{n}} is determined
(via the codegeneracies) by maps
\w{g\sp{j}:\hfsm{\Del{j}}{\bZ}\to P\Omega\sp{n-j-1}\oW{n}}
making the following diagram commute:
\mytdiag[\label{eqmaptosfib}]{
\hfsm{\Del{j}}{\bZ} \ar[rrr]\sp{g\sp{j}} &&& P\Omega\sp{n-j-1}\oW{n} \\
\hfsm{\Del{j-1}}{\bZ}
\ar@<1.5ex>[u]\sp(0.5){\delta\sp{0}}\sb(0.5){\dotsc}
\ar@<-1.5ex>[u]\sb(0.5){\delta\sp{j}} \ar[rrr]\sp{g\sp{j-1}} &&&
P\Omega\sp{n-j}\oW{n}~. \ar@<2ex>[u]\sp{\iota\circ p=d\sp{1}}\sb{\dotsc}
 \ar@<-2ex>[u]\sb{0=d\sp{i}\ (i\neq 1)}
}
\noindent See \S \ref{srcwr}(d).

Taking adjoints \w{\tg{j}:\hfsm{\Del{n}}{\bZ}\to\oW{n}}
\wb{0\leq j\leq n} as above, \wref{eqcosimprels} is replaced by:
\begin{myeq}\label{eqcosimprelas}
\tg{j}\circ\delta\sp{i}~=~\begin{cases}
\widetilde{\iota p g\sp{j}} & \text{for}\hsm i=n-j-1\hsm \text{and}\ j<n\\
\widetilde{\iota p g\sp{j-1}} & \text{for}\hsm i=n-j+1\hsm \text{and}\ j>0\\
0  & \text{otherwise.}
\end{cases}
\end{myeq}
\noindent and as before we deduce that
\begin{myeq}\label{eqsusd}
\Tot\sb{n}\sDu{n}~=~\Tot\sDu{n}~\simeq~\Omega\sp{n}\oW{n}~.
\end{myeq}
\end{proof}

\begin{remark}\label{rnewfold}
We see from \wref{eqcosimprelas} that the folding polytopes used to show that
\wref{eqsusd} holds are different from those defined in \S \ref{dfoldp},
since we need to identify the $j$-th facet of \w{\Dels{n}{n-j+1}} with
the \wwb{j-1}facet of  \w{\Dels{n}{n-j}} for each \w[.]{0<j\leq k}
We denote this variant by \w[,]{\hP{n}{k}} which we called a \emph{modified} folding
polytope. See Figure \ref{fig3} below for an example.
\end{remark}

\begin{prop}\label{pqfib}
For \w{\Wu} as above, the sequence of maps of \S \ref{srcwr}(d) induce a
quasifibration sequence
\begin{myeq}\label{eqqfib}
\Tot\sb{n}\sDu{n}~\xra{\Tot i\bp{n}}~\Tot\sb{n}\hWu{n}~\xra{\Tot r\bp{n}}~
\Tot\sb{n-1}\W{n-1}~\xra{\Tot \bF\bp{n}}~\Tot\sb{n-1}\Du{n}~.
\end{myeq}
\end{prop}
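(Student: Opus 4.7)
The plan is to apply $\Tot\sb{n}$ to the Reedy fibration $r\bp{n}:\hWu{n}\to\W{n-1}$ of \S \ref{srcwr}(d), extract its strict fiber $\sDu{n}$, and then extend the resulting three-term fiber sequence one step via the standard Puppe construction, identifying the fourth term as $\Tot\sb{n-1}\Du{n}$ and the connecting map as $\Tot\bF\bp{n-1}$.

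The first step is routine: $r\bp{n}$ is already known to be a Reedy fibration with strict fiber $\sDu{n}$ (end of \S \ref{srcwr}(d)), and since $\Tot\sb{n}=\map\sb{c\Ss}(\sk{n}\Deu,-)$ is a right Quillen functor with respect to the Reedy model structure on $c\Sa$, applying it yields a Kan fibration $\Tot\sb{n}\hWu{n}\to\Tot\sb{n}\W{n-1}$ with fiber $\Tot\sb{n}\sDu{n}$.

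Next I would use coskeletality to realign the $\Tot$-indices. The inductive construction in \S \ref{srcwr} introduces no non-degenerate data in $\W{n-1}$ above cosimplicial degree $n-1$, so $\W{n-1}$ is $(n-1)$-coskeletal and $\Tot\sb{n}\W{n-1}\simeq\Tot\sb{n-1}\W{n-1}$; likewise $\Du{n}=\F\E\Ds$ is $(n-1)$-coskeletal, so $\Tot\sb{n-1}\Du{n}\simeq\Tot\Du{n}$. Lemma \ref{ldtot} and \eqref{eqsusd} then identify $\Tot\sb{n-1}\Du{n}\simeq\Omega\sp{n-1}\oW{n}$ and $\Tot\sb{n}\sDu{n}\simeq\Omega\sp{n}\oW{n}$, which are compatible with $\Omega\Tot\sb{n-1}\Du{n}\simeq\Tot\sb{n}\sDu{n}$, exactly the numerical consistency needed to continue the sequence to a fourth term.

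The main obstacle -- and the only substantive step -- is to identify the Puppe connecting map $\Tot\sb{n-1}\W{n-1}\to\Tot\sb{n-1}\Du{n}$ with $\Tot\bF\bp{n-1}$. For this I would exploit the construction $\hWu{n}=\F\tWu{n}$, where $\tWu{n}$ was defined in \S \ref{srcwr}(c) as the cofiber in $\Sa\sp{\Delta\sb{+}}$ of $\widetilde{F}:\U\W{n-1}\to\E\Ds$. Because each $D\sp{k}=P\Omega\sp{n-k-2}\oW{n}$ is a contractible path object, the dimensionwise product formula \eqref{eqdopb} is simultaneously a strict pullback, and globally this presents $\hWu{n}$ as the Reedy homotopy pullback of $\W{n-1}\xra{\bF\bp{n-1}}\Du{n}\leftarrow\ast$ in $c\Sa$ -- equivalently, as the homotopy fiber of $\bF\bp{n-1}$. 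Applying $\Tot\sb{n-1}$ and invoking the coskeletal identifications from the previous step then shows that the Puppe connecting map is precisely $\Tot\bF\bp{n-1}$, completing \eqref{eqqfib}. Everything outside this pullback interpretation reduces to standard Reedy/coskeletal arguments and direct appeal to Lemma \ref{ldtot}.
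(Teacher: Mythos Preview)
Your approach differs from the paper's, and the key step has a real gap.

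The paper does not invoke an abstract Puppe sequence. For exactness at \w{\Tot\sb{n-1}\W{n-1}} it argues directly: given \w[,]{g:\bZ\to\Tot\sb{n-1}\W{n-1}} a nullhomotopy $H$ of \w{\bF\bp{n-1}\circ g} is unpacked as a system of maps \w{H\sp{k}:\sms{C\Del{k}}{\bZ}\to P\Omega\sp{n-k-1}\oW{n}} subject to the relations \wref[,]{eqnullhtpy} and these are shown to be literally the same data as the components \w{h\sp{k}} of a lift \w[,]{h:\bZ\to\Tot\sb{n}\hWu{n}} as in \wref[,]{eqliftmap} with the residual ambiguity in \w{h\sp{0}} parametrized by \w[.]{\Tot\sb{n}\sDu{n}} This explicit bijection is then reused verbatim in \S \ref{sindcon} to define the operations \w[,]{\llrr{-}{r}} so the concrete formulas are the point, not merely a route to the proposition.

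Your argument rests on the claim that \w{\hWu{n}} is the Reedy homotopy fiber of \w[,]{\bF\bp{n-1}:\W{n-1}\to\Du{n}} justified by saying each \w{D\sp{k}} is a contractible path object. That premise is false: by the convention following \wref[,]{eqdk} \w[,]{D\sp{n-1}=P\Omega\sp{-1}\oW{n}:=\oW{n}} which is not contractible; indeed Lemma \ref{ldtot} gives \w[,]{\Tot\Du{n}\simeq\Omega\sp{n-1}\oW{n}} so \w{\Du{n}} is far from Reedy-acyclic. More decisively, the identification itself fails. The strict fiber of \w{r\bp{n}} is \w[,]{\sDu{n}} whose space in cosimplicial degree \w{n-1} is contractible, whereas the fiber of the standard homotopy-fiber projection would be the levelwise loop object \w[,]{\Omega\Du{n}} which in degree \w{n-1} is weakly \w[.]{\Omega\oW{n}} Hence \w{\sDu{n}} and \w{\Omega\Du{n}} are not Reedy weakly equivalent, and \w{\hWu{n}} is \emph{not} the Reedy homotopy fiber of \w[.]{\bF\bp{n-1}} The formulas \wref{eqdopb} and \wref{eqcindstep} describe a bespoke twisted product, with \w{d\sp{0}} twisted by \w{\Fk{r}} as in \wref[,]{eqnulhtpy} not a path-object pullback; without that identification your Puppe argument does not go through.
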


\begin{proof}
As noted in \S \ref{srcwr}, \w{\sDu{n}\xra{i\bp{n}}\hWu{n}\xra{r\bp{n}}\W{n-1}} is
a Reedy fibration sequence of Reedy fibrant cosimplicial sets, and \w{\W{n-1}} is
\wwb{n-1}coskeletal, so applying \w{\Tot} yields exactness at the left three terms of
\wref[.]{eqqfib}

For the right three terms, note that for any pointed space $\bZ$ a map
\w{g:\bZ\to\Tot\sb{n-1}\W{n-1}} is described by
\w{g\sp{k}:\hfsm{\Del{k}}{\bZ}\to\Wn{k}{n-1}} for \w[,]{0\leq k\leq n-1}
as in the proof of Lemma \ref{ldtot}. Moreover, the reduced cone on the half-smash:
\w{\bar{C}(\hfsm{\bX}{\bY})} (where \w{\bY\in\Sa} is pointed, but \w{\bX\in\Ss}
is not) is isomorphic to \w{\sms{C\bX}{\bY}} (the smash product with the unreduced
cone on $\bX$ \wh cf.\ \S \ref{snac}).

So a nullhomotopy
\w{H:\bF\bp{n}\circ g\sim 0} is determined by a sequence of maps
\w{H\sp{k}:\hfsm{\Del{k}}{\bZ}\to P\Omega\sp{n-k-1}\oW{n}} for \w[,]{1\leq k\leq n}
and the following diagram must commute for each $k$, as in \wref[:]{eqmaptofib}
\mytdiag[\label{eqnulhtpy}]{
\sms{C\Del{k}}{\bZ} \ar@/^{1.9pc}/[rrrrrd]\sp{H\sp{k}} &&&&&\\
& \hfsm{\Del{k}}{\bZ} \ar@{_{(}->}[ul]\sb{\delta\sp{0}} \ar[rr]\sp{g\sp{k}} &&
\Wn{k}{n-1} \ar[rr]\sp(0.45){\Fk{k}} && P\Omega\sp{n-k-2}\oW{n} \\
\sms{C\Del{k-1}}{\bZ}
\ar@<2.5ex>[uu]\sp(0.55){Cd\sp{0}}\sp(0.45){=\delta\sp{1}}\sb(0.5){\dotsc}
 \ar@<-2.5ex>[uu]\sb(0.55){Cd\sp{k}}\sb(0.45){=\delta\sp{k+1}}
\ar@/^{3.3pc}/[rrrrrd]\sp{H\sp{k-1}} &&&&&\\
& \hfsm{\Del{k-1}}{\bZ} \ar@{_{(}->}[ul]\sb{\delta\sp{0}}
\ar@<2.5ex>[uu]\sp(0.5){d\sp{0}}\sb(0.5){\dotsc} \ar@<-2.5ex>[uu]\sb(0.5){d\sp{k}}
\ar[rr]\sp(0,55){g\sp{k-1}} &&
\Wn{k-1}{n-1} \ar[rr]\sp(0.45){\Fk{k-1}} && P\Omega\sp{n-k-1}\oW{n}
 \ar@<3ex>[uu]\sp(0.6){\iota\sb{n-k-1}\circ p}\sb(0.6){=d\sp{0}}
 \ar@<-4ex>[uu]\sb(0.4){(j>0)}\sb(0.6){=d\sp{j}}\sp(0.6){0}
}
\noindent Here we think of \w{\Del{k}} as the (unreduced) cone \w[,]{C\Del{k-1}} with
\w{\delta\sp{0}:\Del{k-1}\to\Del{k}} the inclusion of the base, and
\w{\delta\sp{j}} the cone on \w{d\sp{j-1}:\Del{k-2}\to\Del{k-1}} for
\w[.]{1\leq j\leq k}
We write \w{\Fk{k}=\Fk{k}\bp{n-1}:\Wn{k}{n-1}\to P\Omega\sp{n-k-2}\oW{n}}
for the composite
\begin{myeq}\label{eqfk}
\Wn{k}{n-1}~\xra{v\sp{k}}~C\sp{k}\W{n-1}~\xra{\Fpk{k}}~P\Omega\sp{n-k-2}\oW{n}~.
\end{myeq}
\noindent See \S \ref{dmoorc} and \S \ref{srcwr}(c).

The maps \w{H\sp{k}} must satisfy:
\begin{myeq}\label{eqnullhtpy}
H\sp{k}\circ\delta\sp{0}=\Fk{k-1}\circ g\sp{k-1},\
H\sp{k}\circ\delta\sp{1}=\iota\sb{n-k-1}\circ p\circ H\sp{k-1},\ \text{and}\
H\sp{k}\circ\delta\sp{j}=0\ \text{for}\ j\geq 2,
\end{myeq}
\noindent where \w{\iota\sb{r}:\Omega\sp{r}\oW{n}\hra P\Omega\sp{r}\oW{n}} is the
inclusion and $p$ is the path fibration. Moreover,
\begin{myeq}\label{eqnullhzero}
H\sp{1}\circ\delta\sp{1}~=~0~,
\end{myeq}
\noindent since $H$ is a nullhomotopy.

On the other hand, a lift of $g$ to \w[,]{h:\bZ\to\Tot\sb{n}\hWu{n}}
is given by a sequence of maps
\w{h\sp{k}:\hfsm{\Del{k}}{\bZ}\to P\Omega\sp{n-k-1}\oW{n}} for \w[,]{0\leq k\leq n}
with
\begin{myeq}\label{eqliftmap}
h\sp{k}\circ\delta\sp{0}~=~\Fk{k-1}\circ g\sp{k-1}~,\hsm
h\sp{k}\circ\delta\sp{1}~=~\iota\sb{n-k-1}\circ  p\circ h\sp{k-1}~,\hsm\text{and}\
h\sp{k}\circ\delta\sp{j}~=~0\hsm \text{for} \ j\geq 2~.
\end{myeq}

Thus, given $H$, we may set \w{h\sp{k}:=H\sp{k}} for \w[.]{1\leq k\leq n}
By \wref[,]{eqnullhzero} we then have
\w{\iota\sb{n-1}\circ p\circ h\sp{0}=0} so \w{h\sp{0}} must factor uniquely as
\w{\bZ\xra{\varphi}\Omega\sp{n}\oW{n}\xra{\iota\sb{n}}P\Omega\sp{n}\oW{n}}
(since \w{\iota\sb{n-1}} is monic).

From the description of \w{g:\bZ\to\Tot\sDu{n}} in the proof of Lemma \ref{ldtot},
we see that for any \w[,]{0\leq m\leq n} any class in \w{[\bZ,\,\Tot\sDu{n}]} may be
represented by a collection of maps
\w{(\widetilde{g}\sb{k}:\hfsm{\Del{n}}{\bZ}\to\oW{n})\sb{k=0}\sp{n}} with
\w{\widetilde{g}\sb{k}=0} for \w{k\neq m} and
\w[.]{\widetilde{g}\sb{m}\rest{\hfsm{\partial\Del{n}}{\bZ}}=0}
For \w[,]{m=0} this shows that the choices for the lift $h$,
given $H$, are uniquely determined by the image under \w{(i\bp{n})\sb{\ast}} of
\w[.]{[g]\in[\bZ,\,\Tot\sb{n}\sDu{n}]}
This completes the proof by showing the exactness at \w{\Tot\sb{n-1}\hWu{n-1}}
in applying \w{[\bZ,-]} to \wref[.]{eqqfib}
\end{proof}

\begin{thm}\label{tuass}
For \w{\Wu} constructed as in \S \ref{srcwr}, the spectral sequence associated
to the tower of fibrations \wref{eqmodtott} agrees from the \ww{E\sb{2}}-term on
with the unstable Adams spectral sequence of \cite[\S 4]{BKanS}.
\end{thm}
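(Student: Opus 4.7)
The plan is to compare the spectral sequence associated to the tower \wref{eqmodtott} with the homotopy spectral sequence of the full $\Tot$-tower \wref{eqtott} of $\Wu$ itself. This latter sequence is by definition the unstable Adams spectral sequence in the form of \cite[X,\S 6]{BKanH}, and by the comparison recalled in the introduction it agrees from the $E\sb{2}$-term on with the version of \cite[\S 4]{BKanS}. Thus it will be enough to exhibit a natural levelwise weak equivalence between the two towers.

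The relevant map is induced by the natural projections $\Wu\to\W{n}$ coming from the identification $\Wu=\lim\sb{m}\W{m}$. Because $\Tot\sb{n}$ is given by a mapping object in $c\Ss$, it preserves all limits, so $\Tot\sb{n}\Wu=\lim\sb{m}\Tot\sb{n}\W{m}$. By \S \ref{sucws}, for every $m\geq n$ the Reedy fibration $\prn{m+1}:\W{m+1}\to\W{m}$ induces a weak equivalence on $\Tot\sb{n}$; since $\Tot\sb{n}$ is right Quillen for the Reedy structure and thus sends Reedy fibrations between Reedy fibrant cosimplicial spaces to fibrations of spaces, the tower $\{\Tot\sb{n}\W{m}\}\sb{m\geq n}$ will consist of acyclic fibrations, whose limit $\Tot\sb{n}\Wu$ is weakly equivalent to $\Tot\sb{n}\W{n}$ via the natural projection.

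These equivalences will be natural in $n$ and compatible with the tower maps $q\sp{n}$ and $(\prn{n})\sb{\ast}$, yielding a levelwise weak equivalence of towers from \wref{eqtott} to \wref{eqmodtott}. This will produce an isomorphism of the associated homotopy spectral sequences from $E\sb{1}$ on, and in particular from $E\sb{2}$ on, completing the identification. As a sanity check on the $E\sb{1}$-level, Proposition \ref{pqfib} and Lemma \ref{ldtot} imply that under this comparison the fiber $\Omega\sp{n}N\sp{n}\Wu$ of \wref{eqtotfs} is weakly equivalent to $\Tot\sb{n}\sDu{n}\simeq\Omega\sp{n}\oW{n}$, and the alternating sum formula \wref{eqdone} for $d\sb{1}$ should translate into the connecting-map description arising from \wref{eqqfib}.

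The main technical obstacle is to verify that $\Tot\sb{n}$ really does interact correctly with the Reedy fibrations and the inverse limit in play, and that the natural projection $\Tot\sb{n}\Wu\to\Tot\sb{n}\W{n}$ is in fact a weak equivalence (rather than merely a bijection of certain components). The former rests on the Reedy fibrancy of each $\W{n}$ built in carefully at step \S \ref{srcwr}(e); the latter follows from the vanishing of the relevant Milnor $\lim\sp{1}$-terms, which in turn is immediate from the fact that the tower $\{\Tot\sb{n}\W{m}\}\sb{m\geq n}$ is eventually constant on every homotopy group.
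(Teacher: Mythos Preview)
Your approach is correct and is arguably more direct than the paper's.  The paper does not compare the two towers levelwise via $\Tot\sb{n}$; instead it works on the \emph{fibers}, showing the chain of trivial fibrations
\[
N\sp{n}\Wu~\xepic{\simeq}~N\sp{n}\W{n}~\xepic{\simeq}~N\sp{n}\hWu{n}~\xepic{\simeq}~\oW{n}
\]
from the explicit description \wref{eqcindstep}, then identifying \w{\pi\sb{\ast}\oW{n}\cong C\sp{n}\pi\sb{\ast}\Wu} and the \ww{d\sb{1}}-differential with the CW attaching map, and finally invoking \cite[X, \S 6.4]{BKanH} and \cite[\S 10.2]{BKanS} to match the resulting $E\sb{2}$ with that of the unstable Adams spectral sequence.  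Both arguments ultimately rest on the same input from \S \ref{sucws} (that \w{\prn{m+1}} is a \w{\Tot\sb{k}}-equivalence for \w[),]{k\leq m} but your phrasing packages it as a single levelwise weak equivalence of towers, which immediately yields agreement of the full spectral sequences from $E\sb{1}$ on, whereas the paper's phrasing leaves the passage from ``same $E\sb{2}$'' to ``same spectral sequence'' implicit in the existence of the natural map \w[.]{\Wu\to\W{n}}  The paper's route has the compensating advantage of making the identifications \w{E\sb{1}\sp{n,\ast}\cong\pi\sb{\ast}\oW{n}} and \w{d\sb{1}=d\sp{0}\sb{\#}} explicit, which are used repeatedly in Sections \ref{cduass}--\ref{cfuass}.
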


\begin{proof}
Because each of the cosimplicial spaces \w[,]{\Wu} \w[,]{\W{n}} and \w{\hWu{n}}
is Reedy fibrant, and the maps \w[,]{\prn{n}} \w[,]{q\bp{n}} and \w{r\bp{n}}
of \S \ref{srcwr} are Reedy fibrations, we have trivial fibrations
\begin{myeq}\label{eqnorch}
N\sp{n}\Wu~\xepic{\simeq}~N\sp{n}\W{n}~\xepic{\simeq}~N\sp{n}\hWu{n}~
\xepic{\simeq}~\oW{n}
\end{myeq}
\noindent for each \w[,]{n\geq 0} since from \wref{eqcindstep} we see that
\w[.]{N\sp{n}\hWu{n}=\prod\sb{r\geq n}\ P\Omega\sp{r-n-1}\oW{r}}

Moreover, by \cite[X, 6.3(ii)]{BKanH} we have
$$
\pi\sb{\ast}\oW{n}~\cong~\pi\sb{\ast}N\sp{n}\Wu~\cong~
N\sp{n}\pi\sb{\ast}\Wu~\cong~C\sp{n}\pi\sb{\ast}\Wu~,
$$
\noindent using the dual of \cite[Lemma 2.11]{BJTurnHA} for the graded
cosimplicial abelian group \w[.]{\pi\sb{\ast}\Wu}

Finally, from the fact that \w{\HiR{\Wu}\cong\Vd} (a free \TRal resolution of
\w[),]{\Gamma=\HiR{\bY}} and that, as in \cite[X, \S 7]{BKanH}, the
\ww{d\sb{1}}-differential of \wref{eqdone} reduces to \w[,]{d\sp{0}\sb{\#}}
coming from the CW attaching map of \w{\Vd} (\S \ref{dcwres}), we conclude
from \cite[X, \S 6.4]{BKanH} that we have a natural isomorphism between
our \ww{E\sb{2}}-term and \w[,]{\pi\sp{\ast}\pi\sb{\ast}\Wu} which
is isomorphic in turn to that of the unstable Adams spectral sequence by
\cite[\S 10.2]{BKanS}.
\end{proof}

%
%
\sect{Differentials in the unstable Adams spectral sequence}
\label{cduass}

In order to describe the differentials in the homotopy spectral sequence for the
tower \wref[,]{eqmodtott} we associate to every \wwb{n,k} slot in the spectral
sequence a sequence of $r$-th order cohomology operations
\w{\llrr{-}{r}:\pi\sb{k+n}\oW{n}\to\pi\sb{k+n+r-1}\oW{n+r}} for \w[,]{r\geq 1}
as described in \S \ref{sshoo}.

\begin{mysubsection}{Differentials and higher cohomology operations}\label{sdhco}
These operations are constructed inductively by a sequence of choices, starting
with (but independent of) a representative of \w[.]{\gamma\in\pi\sb{k+n}\oW{n}}
called the \emph{data} for  \w[.]{\llrr{\gamma}{r}} In particular, for each
\w[,]{r\geq 1}
\w{\llrr{\gamma}{r+1}} is defined only if \w{\llrr{\gamma}{r}} vanishes, and the data
for the former includes a choice of a nullhomotopy \w{\Hv{n+r}} for the latter value.

The choice of \w{\Hv{n+r}} defines a certain $\infty$-homotopy commutative
diagram (in the form of a map \w[),]{\hGv{n+r}:\hfsm{\Deu}{\bS{k}}\to\hWu{n+r}}
which we then make cofibrant (as a map
\w[),]{\Gv{n+r}:\hfsm{\Deu}{\bS{k}}\to\W{n+r}} yielding an appropriate value for
\w[.]{\llrr{\gamma}{r+1}}
\end{mysubsection}

\begin{mysubsection}{The inductive construction}\label{sindcon}
We want to associate to every \wwb{n,k} slot in the spectral sequence for
\w{\Wu} a sequence of $r$-th order cohomology operations
\w{\llrr{-}{r}:\pi\sb{k+n}\oW{n}\to\pi\sb{k+n+r-1}\oW{n+r}} for \w[,]{r\geq 1}
as described in \S \ref{sshoo}.

We start by representing
\w{\gamma\in E\sb{1}\sp{n,k+n}=\pi\sb{k+n}\oW{n}=\pi\sb{k}\Omega\sp{n}\oW{n}}
by a map \w[,]{h:\bS{k}\to\Omega\sp{n}N\sp{n}\oW{n}} using \wref[.]{eqnorch}
Postcomposing $h$ with the inclusion
\w{\iota\sp{n}:\Omega\sp{n}N\sp{n}\oW{n}\hra\Tot\sb{n}\W{n}} from \wref{eqtotfs}
and the identification \w{\Tot\sb{n}\W{n}\cong\Tot\W{n}} of
Proposition \ref{pqfib}, we obtain \w{h':\bS{k}\to\Tot\W{n}} and so by adjunction
$$
\Gv{n}:\hfsm{\Deu}{\bS{k}}\to\W{n}~.
$$
\noindent By \wref[,]{eqtotfs} we may assume that
\w{\Gn{i}{n}:\hfsm{\Del{i}}{\bS{k}}\to\Wn{i}{n}} is zero for \w[.]{i<n}

At the $r$-th stage, let \w[,]{N:=n+r-1} and assume by induction that we have lifted
$\gamma$ (that is, \w[)]{\Gv{n}} along \wref{eqtower} to
\w[,]{\Gv{N}:\hfsm{\Deu}{\bS{k}}\to\W{N}} again with
\w{\Gn{j}{N}:\hfsm{\Del{j}}{\bS{k}}\to\Wn{j}{N}} equal to zero for \w[.]{j<n}
By Proposition \ref{pqfib}, \w{\Gv{N}} can be lifted to \w{\hGv{N+1}}
(and thus to \w[),]{\Gv{N+1}} up to homotopy, if and only if
\w[.]{\Fv{N}\circ\Gv{N}\sim 0} We wish to identify the obstruction to the existence
of a nullhomotopy
$$
\Hv{N}:C\Deu\wedge\bS{k}\to\Du{N}
$$
\noindent as an $r$-th order cohomology operation.

Note that \w{\Hv{N}} is completely determined by its projection on the
non-codegenerate factors of \w[,]{\Du{N}} namely,
\w{\Hn{j}{N}:\sms{C\Del{j}}{\bS{k}}\to P\Omega\sp{N-j-1}\oW{N+1}}
(cf.\ \wref[,]{eqdk} and compare \wref[):]{eqnulhtpy}
\mytdiag[\label{eqnullhmtpy}]{
\sms{C\Del{j}}{\bS{k}} \ar@/^{1.9pc}/[rrrrrd]\sp{\Hn{j}{N}} &&&&&\\
& \hfsm{\Del{j}}{\bS{k}} \ar@{_{(}->}[ul]\sb{\delta\sp{0}}
\ar[rr]\sp{\Gv{N}\sp{j}} &&
\Wn{j}{N} \ar[rr]\sp(0.4){\Fk{j}} && P\Omega\sp{N-j-1}\oW{N+1} \\
\sms{C\Del{j-1}}{\bS{k}}  \ar@<2.5ex>[uu]\sp{Cd\sp{0}=\delta\sp{1}}\sb{\dotsc}
 \ar@<-1.0ex>[uu]\sb{\delta\sp{j+1}=Cd\sp{j}}
\ar@/^{2.9pc}/[rrrrrd]\sp{\Hn{j-1}{N}} &&&&&\\
& \hfsm{\Del{j-1}}{\bS{k}} \ar@{_{(}->}[ul]\sb{\delta\sp{0}}
\ar@<2.5ex>[uu]\sp(0.4){d\sp{0}}\sb(0.4){\dotsc} \ar@<-2.5ex>[uu]\sb(0.4){d\sp{j}}
\ar[rr]\sp{\Gv{N}\sp{j-1}} &&
\Wn{j-1}{N} \ar[rr]\sp{\Fk{j-1}}
\ar@<2.5ex>[uu]\sp(0.3){d\sp{0}}\sb(0.3){\dotsc} \ar@<-2.5ex>[uu]\sb(0.3){d\sp{j}}
&& P\Omega\sp{N-j}\oW{N+1}
 \ar@<3ex>[uu]\sp(0.6){\iota\sb{n-j-1}\circ p}\sb(0.6){=d\sp{0}}
 \ar@<-4ex>[uu]\sb(0.6){=d\sp{i}}\sb(0.45){(i\geq 1)}\sp(0.6){0}
}
\noindent with \w{\Fk{j}=\Fk{j}\bp{N}} given by \wref[.]{eqfk}

As in the proof of Lemma \ref{ldtot} (see also \cite[\S 5]{BSenH}), by adjointing
each of the path or loop directions of \w{P\Omega\sp{N-j-1}\oW{N+1}} to a cone
direction on \w[,]{\Del{j}} we can replace \w{\Hn{j}{N}} by
\w{\tHn{j}{N}:\hfsm{\Del{N+1}}{\bS{k}}\to\oW{N+1}} for \w{n\leq j\leq N}
(since \w{\Gn{j}{N}=0} for \w[,]{j<n} we may assume the same for \w[,]{\Hn{j}{N}}
and thus \w[).]{\tHn{j}{N}}

We retain the conventions of the proof of Proposition \ref{pqfib}: thus
the first facet of \w{\Del{N+1}} is the base of the cone \w[,]{C\Del{j}}
the facets \w{1,\dotsc,N-j-1} of \w{\Del{N}} correspond to the loop directions of
\w{P\Omega\sp{N-j-1}\oW{N+1}}
(counted outwards from \w[),]{\oW{N+1}} with the \wwb{N-j}th facet corresponding
to the path direction. As long as \w[,]{j>0} the next \w{j+1} faces of
\w{\Del{N+1}} are the original faces of \w[,]{\Del{j}} re-indexed by \w[.]{N-j}

The fact that \w{\Hv{N}} is a map of cosimplicial spaces then translates into the
following conditions:
\begin{myeq}\label{eqcosimap}
\tHn{j}{N}\circ\delta\sp{i}~=~\begin{cases}
\widetilde{\Fk{j} G\sp{j}} & \text{for}\hsm i=0\\
\widetilde{\iota p H\sp{j}} & \text{for}\hsm i=N-j\hsm \text{and}\hsm j<N\\
\widetilde{\iota p H\sp{j-1}} & \text{for}\hsm i=N-j+1\hsm \text{and}\hsm j>0\\
0  & \text{otherwise.}
\end{cases}
\end{myeq}
\noindent for each \w[.]{n\leq j\leq N}

Thus we see that \w{\Hv{N}} defines a map
\w{\tHn{}{N}:\hfsm{\PP{N+1}{r}}{\bS{k}}\to\oW{N+1}} (cf.\ \S \ref{dfoldp}), since the
maps \w{\tHn{j}{N}} on \w{\sms{\Dels{N+1}{j}}{\bS{k}}} agree on the identified
facets.

Note that from \wref{eqcosimap} we see that the map \w[,]{\tHn{}{N}}
when restricted to \w[,]{\hfsm{\partial\PP{N+1}{r}}{\bS{k}}} depends only on the given
map \w{\Fv{N}} and the chosen lift \w[,]{\Gv{N}} so we may denote it by
$$
\Phi'\sb{(F,G)}:\hfsm{\partial\PP{N+1}{r}}{\bS{k}}\to\oW{N+1}~.
$$
\noindent Moreover, \w{\Phi'\sb{(F,G)}} is zero on \w{\{v\}\times\bS{k}} for each
of the cone vertices $v$ of \w{\Dels{N+1}{j}} in \w{\PP{N+1}{r}} (because our maps
were defined on the smash product with the cone). Thus \w{\Phi'\sb{(F,G)}} induces
a map
$$
\Phi\sb{(F,G)}:\sms{\partial\PP{N+1}{r}}{\bS{k}}\to\oW{N+1}~.
$$
\noindent Its domain is a topological \wwb{N+k}sphere.
\end{mysubsection}

\begin{example}\label{egfoldp}
The boundaries of the three constituent tetrahedra of \w[,]{\PP{3}{3}} split open,
are illustrated in Figure \ref{fig2}, which also shows how each facet is mapped
under \w[,]{\tHn{j}{3}:\Dels{3}{j}\to\oW{3}} and which facets are identified in
\w{\PP{3}{3}} (dotted arrows).

%
%
\begin{figure}[htbp]
\begin{center}
\begin{picture}(410,165)(10,10)
%
%
\put(70,165){$\partial\Dels{3}{1}$}
%
%
\put(10,10){\circle*{3}}
\put(3,7){\scriptsize $3$}
\put(48,75){\circle*{3}}
\put(40,73){\scriptsize $2$}
\put(85,140){\circle*{3}}
\put(84,145){\scriptsize $3$}
\put(85,10){\circle*{3}}
\put(84,0){\scriptsize $1$}
\put(160,10){\circle*{3}}
\put(164,7){\scriptsize $3$}
\put(122,75){\circle*{3}}
\put(126,73){\scriptsize $0$}
%
%
\put(10,11){\line(1,0){150}}
\put(48,76){\line(1,0){75}}
\bezier{400}(10,10)(48,75)(85,140)
\bezier{400}(160,10)(122,75)(85,140)
\bezier{400}(48,75)(67,43)(85,10)
\bezier{400}(122,75)(104,43)(85,10)
%
%
\put(45,40){\scriptsize $\ast$}
\put(75,46){\scriptsize $\widetilde{\Fk{0} G\sp{0}}$}
\put(75,98){\scriptsize $\widetilde{\iota pH\sp{1}}$}
\put(120,40){\scriptsize $\ast$}
%
%
\bezier{40}(100,90)(125,110)(150,130)
\put(105,94){\vector(-1,-1){7}}
\put(145,125){\vector(1,1){7}}
%
%
\put(200,165){$\partial\Dels{3}{2}$}
%
%
\put(210,10){\circle*{3}}
\put(209,0){\scriptsize $3$}
\put(173,75){\circle*{3}}
\put(166,71){\scriptsize $0$}
\put(135,140){\circle*{3}}
\put(129,141){\scriptsize $3$}
\put(210,140){\circle*{3}}
\put(209,144){\scriptsize $2$}
\put(285,140){\circle*{3}}
\put(288,141){\scriptsize $3$}
\put(248,75){\circle*{3}}
\put(251,71){\scriptsize $1$}
%
%
\put(135,141){\line(1,0){150}}
\put(173,76){\line(1,0){75}}
\bezier{400}(210,10)(173,75)(135,140)
\bezier{400}(210,10)(248,75)(285,140)
\bezier{400}(173,75)(192,107)(210,140)
\bezier{400}(248,75)(229,107)(210,140)
%
%
\put(198,93){\scriptsize $\widetilde{\Fk{1} G\sp{1}}$}
\put(240,110){\scriptsize $\widetilde{\iota pH\sp{2}}$}
\put(165,110){\scriptsize $\widetilde{\iota pH\sp{1}}$}
\put(206,45){\scriptsize $\ast$}
%
%
\bezier{40}(260,135)(290,110)(320,85)
\put(265,130){\vector(-1,1){7}}
\put(315,90){\vector(1,-1){7}}
%
%
\put(330,165){$\partial\Dels{3}{3}$}
%
%
\put(260,10){\circle*{3}}
\put(253,7){\scriptsize $3$}
\put(298,75){\circle*{3}}
\put(291,72){\scriptsize $1$}
\put(335,140){\circle*{3}}
\put(334,145){\scriptsize $3$}
\put(335,10){\circle*{3}}
\put(334,0){\scriptsize $0$}
\put(410,10){\circle*{3}}
\put(414,7){\scriptsize $3$}
\put(372,75){\circle*{3}}
\put(376,72){\scriptsize $2$}
%
%
\put(260,11){\line(1,0){150}}
\put(298,76){\line(1,0){75}}
\bezier{400}(260,10)(298,75)(335,140)
\bezier{400}(410,10)(372,75)(335,140)
\bezier{400}(298,75)(317,43)(335,10)
\bezier{400}(372,75)(354,43)(335,10)
%
%
\put(325,46){\scriptsize $\Fk{2} G\sp{2}$}
\put(325,98){\scriptsize $\widetilde{\iota pH\sp{2}}$}
\put(295,40){\scriptsize $\ast$}
\put(370,40){\scriptsize $\ast$}
\end{picture}
\end{center}

\caption[fig2]{The three tetrahedra of \ $\PP{3}{3}$ \ mapped to $\oW{3}$}
\label{fig2}
\end{figure}
\end{example}

\begin{lemma}\label{lsmash}
Given maps \w{\Fv{N}} and \w{\Gv{N}} as above,
\w{\Phi\sb{(F,G)}:\sms{\partial\PP{N+1}{r}}{\bS{k}}\to\oW{N+1}} is nullhomotopic if
and only if \w{\Psi'\sb{(F,G)}:\hfsm{\partial\PP{N+1}{r}}{\bS{k}}\to\oW{N+1}}
extends to \w[,]{\tHn{}{N}:\hfsm{\PP{N+1}{r}}{\bS{k}}\to\oW{N+1}} implying the
existence of \w{\Hv{N}:\Fv{N}\circ\Gv{N}\sim 0} \wwh and therefore of a lift
of \w{\Gv{n-1}} to \w[.]{\Gv{n}}
\end{lemma}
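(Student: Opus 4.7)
The plan is to reduce the lemma to the standard fact that a map from a sphere to a space extends over the bounding ball if and only if it is nullhomotopic. Concretely, I would proceed in three steps and then invoke Proposition \ref{pqfib} for the final lifting statement.

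First, I would verify that every map in sight factors through the indicated smash product quotient. By the construction in \S \ref{sindcon}, each $\tHn{j}{N}$ is obtained from $\Hn{j}{N}$ by iteratively adjointing the path/loop directions of $P\Omega\sp{N-j-1}\oW{N+1}$ into cone directions on $\Del{N+1}$; this adjunction automatically sends every resulting cone tip to the basepoint of $\oW{N+1}$. Consequently, $\tHn{j}{N}$ vanishes on $\{v\}\times\bS{k}$ for each cone vertex $v$ of the constituent simplex $\Dels{N+1}{j}$, and the relations \wref{eqcosimap} are precisely what cause these maps to agree on the identified facets, gluing them into a single $\tHn{}{N}$ on $\hfsm{\PP{N+1}{r}}{\bS{k}}$. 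The same vanishing holds in particular on the boundary, which is the domain of $\Phi'\sb{(F,G)}$. Collapsing all cone vertices to a single basepoint turns the half-smash into the reduced smash, so $\Phi'\sb{(F,G)}$ descends uniquely to $\Phi\sb{(F,G)}$, and any extension to the half-smash descends uniquely to a pointed map out of $\sms{\PP{N+1}{r}}{\bS{k}}$.

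Next, I would invoke Remark \ref{rfoldp}, which ensures that $\PP{N+1}{r}$ is a PL $(N+1)$-ball with boundary $\partial\PP{N+1}{r}$ a PL $N$-sphere. After designating a (identified) cone vertex as basepoint, the pair $(\PP{N+1}{r},\partial\PP{N+1}{r})$ is pointedly homotopy equivalent to the reduced-cone pair $(\bar{C}A, A)$ with $A:=\partial\PP{N+1}{r}$. Smashing with $\bS{k}$ preserves this, so $\sms{\PP{N+1}{r}}{\bS{k}}\simeq\bar{C}(\sms{\partial\PP{N+1}{r}}{\bS{k}})$ rel $\sms{\partial\PP{N+1}{r}}{\bS{k}}$. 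Therefore an extension of $\Phi\sb{(F,G)}$ from the sphere to the ball is by definition a nullhomotopy, which, combined with the bijection of Step 1, establishes the equivalence claimed in the lemma.

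Finally, the existence of the nullhomotopy $\Hv{N}:\Fv{N}\circ\Gv{N}\sim 0$ is just a repackaging of $\tHn{}{N}$: its non-codegenerate components are by definition the $\tHn{j}{N}$, and \wref{eqcosimap} guarantees that they assemble into a map of cosimplicial spaces $C\Deu\wedge\bS{k}\to\Du{N}$. Granted such an $\Hv{N}$, the exactness of the sequence \wref{eqqfib} at $\Tot\sb{n-1}\W{n-1}$, supplied by Proposition \ref{pqfib}, produces the desired lift of $\Gv{N}$ through $\prn{N+1}:\W{N+1}\to\W{N}$. I expect the one nontrivial point to be the combinatorial bookkeeping in Step 1 \wh tracking which vertices of the various $\Dels{N+1}{j}$ are actually the cone tips produced by the successive adjunctions, and checking that the facet identifications defining $\PP{N+1}{r}$ respect these vanishing conditions. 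The topological content is standard; everything else is verifying that the folding-polytope machinery of \cite[\S 5]{BSenH} behaves as advertised.
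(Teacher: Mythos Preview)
Your proposal is correct and follows the same route as the paper: reduce to the classical extension-over-a-ball criterion via Remark \ref{rfoldp}, toggle between the half-smash and the smash via the cone-vertex collapse, and invoke Proposition \ref{pqfib} for the lift. The only detail you elide is that Proposition \ref{pqfib} lifts $\Gv{N}$ only to $\hGv{N+1}$ in $\hWu{N+1}$; the paper then applies the LLP against the trivial Reedy fibration $q\bp{N+1}:\W{N+1}\xepic{\simeq}\hWu{N+1}$ (see \wref{eqllp}) to obtain $\Gv{N+1}$ itself, arranged so that $\Gn{i}{N+1}=0$ for $i<n$.
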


\begin{proof}
As noted in \S \ref{rfoldp}, the cone \w{C\partial\PP{N+1}{r}} is homeomorphic to
\w[,]{\PP{N+1}{r}} and the quotient map
\w{q:\hfsm{\partial\PP{N+1}{r}}{\bS{k}}\epic\sms{\partial\PP{N+1}{r}}{\bS{k}}} extends
naturally to
\w[.]{q':\hfsm{\PP{N+1}{r}}{\bS{k}}\epic\sms{C\partial\PP{N+1}{r}}{\bS{k}}}
Thus a nullhomotopy for \w{\Phi\sb{(F,G)}} defines an extension of
\w{\Phi'\sb{(F,G)}} to \w[.]{\hHv{N}:\PP{N+1}{r}\times\bS{k}\to\oW{N+1}}
Restricting \w{\tHn{}{N}} to each of the \wwb{N+1}simplices \w{\Dels{N+1}{j}} of
\w{\PP{N+1}{r}} defines a collection of maps \w{\tHn{j}{N}} \wb{j=1,\dotsc,N+1}
satisfying \wref{eqcosimap} (with \w{\iota p H\sp{j}} defined by restricting
\w{\tHn{}{N}} to the appropriate facets gluing the \wwb{N+1}simplices together).
As in the proof of Proposition \ref{pqfib}, the nullhomotopy \w{\Hv{N}} defines
a lift of \w{\Gv{N}} to \w[.]{\hGv{N+1}:\hfsm{\Deu}{\bS{k}}\to\hWu{N+1}}

If we assume by induction that \w{\Gn{i}{N}:\hfsm{\Del{i}}{\bS{k}}\to\Wn{i}{N}}
is zero for \w[,]{i<n} we may assume the same for the maps \w[,]{\Hn{i}{N}}
and thus for \w[.]{\hGn{i}{N+1}}
We then use the LLP in the Reedy model category of cosimplicial spaces to obtain the
required lift:
\mydiagram[\label{eqllp}]{
\hfsm{\sk{n-1}\Deu}{\bS{k}} \ar[rrr]\sp{0} \ar@{^{(}->}[d]\sb{\inc} &&&
\W{N+1} \ar@{->>}[d]\sp{q\bp{N+1}}\sb{\simeq} & \\
\hfsm{\Deu}{\bS{k}} \ar[rrr]\sb{\hGv{N+1}} \ar@{.>}[urrr]\sp{\Gv{N+1}} &&& \hWu{N+1},
}
\noindent again with \w{\Gn{i}{N+1}=0} for \w[.]{i<n}
\end{proof}

\begin{defn}\label{dhco}
Assume given an ($R$-good) space $\bY$, a CW resolution \w{\Vd\in s\RAlg} of
\w[,]{\HsR{\bY}} and a realization \w{\Wu} of \w{\Vd} constructed by
suitable choices of maps \w{\Fv{n}:\W{n}\to\Du{n}} for each \w[,]{n\geq 0}
as in \S \ref{srcwr}. For each pair \w{(k,n)} we then have a sequence of higher 
cohomology operations
$$
\llrr{-}{r}~:~\pi\sb{k+n}\oW{n}~\to~\pi\sb{N+k}\oW{N+1}~=~\pi\sb{k+n+r-1}\oW{n+r}
$$
\noindent for \w{r\geq 1} and \w[,]{N=n+r-1} which serve as obstructions to
lifting \w{\gamma\in\pi\sb{k}\Omega\sp{n}\oW{n}} to
\w[,]{\Gv{n+r+1}:\hfsm{\Deu}{\bS{k}}\to\W{n+r+1}} The \emph{data} for
\w{\llrr{-}{r}} consists of compatible choices of lifts
\w{\Gv{i}:\hfsm{\Deu}{\bS{k}}\to\W{i}} for \w{n\leq i\leq N} as
above, and the \emph{value} of \w{\llrr{\gamma}{r}} associated to this data is
the class of \w{[\Phi\sb{(F,G)}]\in\pi\sb{N+k}\oW{N+1}}
constructed in \S \ref{sindcon}. The \emph{indeterminacy} of \w{\llrr{\gamma}{r}} is
the subset of \w{\pi\sb{N+k}\oW{N+1}} consisting of all possible values, for all
(compatible) choices of the data \w{\Gv{i}} (with the maps
\w{(\Fv{n}:\W{n}\to\Du{n})\sb{n=0}\sp{\infty}} fixed). We say that the operation
\emph{vanishes} if there is a choice of the data \w{\Gv{i}} with value zero.
\end{defn}

From the description above we deduce:

\begin{thm}\label{thco}
Each value \w{\llrr{\gamma}{r}\in\pi\sb{k+n+r-1}\oW{n+r}=E\sb{1}\sp{n+r,k+n+r-1}}
of the $r$-th order operation \w{\llrr{-}{r}} represents the result of applying
the differential \w{d\sb{r}} to the element of \w{E\sb{r}\sp{n,k+n}} represented by
\w[.]{\gamma\in\pi\sb{k+n}\oW{n}=E\sb{1}\sp{n,k+n}}
The indeterminacy for \w{\llrr{-}{r}} is the same as that for the differential (as
a map \w[).]{E\sb{1}\to E\sb{1}}
\end{thm}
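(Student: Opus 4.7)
The plan is to argue by induction on $r \geq 1$, using Theorem \ref{tuass} to replace the unstable Adams spectral sequence by the homotopy spectral sequence of the tower \eqref{eqmodtott}, and then invoking the general exact-couple principle that $d\sb{r}$ is the obstruction to lifting a class $r$ stages further through a tower of fibrations. The bridge from this abstract description to the concrete folding-polytope operation $\llrr{-}{r}$ is provided by Proposition \ref{pqfib} together with the Tot-computation $\Tot \Du{N+1} \simeq \Omega\sp{N} \oW{N+1}$ of Lemma \ref{ldtot}.

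The first step is to observe that, via the adjunction $\Tot \W{m} = \map\sb{c\Ss}(\Deu, \W{m})$, the data $(\Gv{i})\sb{i=n}\sp{N}$ with $N = n+r-1$ for $\llrr{\gamma}{r}$ in Definition \ref{dhco} is exactly a successive lift of $\iota\sp{n}(\gamma) \in \pi\sb{k} \Tot\sb{n} \W{n}$ through $r-1$ stages of \eqref{eqmodtott}; this matches the exact-couple characterization (Figure \ref{fig1}, Remark \ref{rhsstow}) of a class $\gamma \in E\sb{1}\sp{n,k+n}$ surviving to $E\sb{r}\sp{n,k+n}$. Then, by Proposition \ref{pqfib} and the exactness at $\Tot\sb{n-1}\W{n-1}$ of \eqref{eqqfib}, the obstruction to extending $\Gv{N}$ to $\Gv{N+1}$ is the homotopy class of $\Fv{N+1} \circ \Gv{N}: \hfsm{\Deu}{\bS{k}} \to \Du{N+1}$, landing in $\pi\sb{k} \Tot \Du{N+1} \cong \pi\sb{k+N} \oW{N+1} = E\sb{1}\sp{n+r, k+n+r-1}$, which is precisely the target of $d\sb{r}$.

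The main step will be to identify this abstract obstruction with the explicit class $[\Phi\sb{(F,G)}]$ built in \S \ref{sindcon}. Following the strategy of the proof of Lemma \ref{ldtot}, I would adjoint each loop or path direction of the factors $P\Omega\sp{N-j-1}\oW{N+1}$ of $\Du{N+1}$ to a cone direction on $\Del{j}$, converting $\Hv{N}$ into maps $\tHn{j}{N}: \hfsm{\Del{N+1}}{\bS{k}} \to \oW{N+1}$ whose assembly along the facet identifications in $\PP{N+1}{r}$ is dictated precisely by the cosimplicial relations \eqref{eqcosimap}. On the boundary $\partial\PP{N+1}{r}$, the map is fully determined by $\Fv{N}$ and $\Gv{N}$, and by Lemma \ref{lsmash} its nullhomotopy is equivalent to the existence of the lift. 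A dimension count (using Remark \ref{rfoldp}) confirms that $\sms{\partial\PP{N+1}{r}}{\bS{k}}$ is an $(N+k)$-sphere, so $[\Phi\sb{(F,G)}] \in \pi\sb{N+k}\oW{N+1}$ agrees with the connecting map of the exact couple applied to $\Gv{N}$.

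For the indeterminacy, I would argue that varying the data $(\Gv{i})$ while holding the maps $(\Fv{n})$ fixed runs through precisely the ambiguity in representing a class that has survived to $E\sb{r}$: two choices of $\Gv{n}$ inducing $\gamma$ differ by something in the image of a differential from an earlier page, and varying $\Gv{i}$ for $n < i \leq N$ alters $[\Phi\sb{(F,G)}]$ by the image of some $d\sb{r'}$ with $r' < r$. The main technical obstacle I expect is the bookkeeping in the adjunction step of the third paragraph: verifying that the identifications of path/loop coordinates with cone directions assemble the $\tHn{j}{N}$ consistently across all $r$ tetrahedra $\Dels{N+1}{j}$ of $\PP{N+1}{r}$, so that no stray suspension or desuspension is introduced, and that the quotient $\hfsm{\partial\PP{N+1}{r}}{\bS{k}} \epic \sms{\partial\PP{N+1}{r}}{\bS{k}}$ forced by vanishing on the cone vertices matches the reduction used in the exact-couple formulation of $d\sb{r}$.
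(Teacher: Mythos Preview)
Your proposal is correct and follows the same approach as the paper: the paper gives no explicit proof of this theorem, stating only ``From the description above we deduce'' Theorem \ref{thco}, meaning the result is intended to follow directly from the inductive construction in \S \ref{sindcon} together with Lemma \ref{lsmash} and Proposition \ref{pqfib}, exactly as you outline. One minor indexing slip: the obstruction map is \w{\Fv{N}\circ\Gv{N}} (since \w{\Fv{N}:\W{N}\to\Du{N+1}} by \wref[),]{eqfwd} not \w[.]{\Fv{N+1}\circ\Gv{N}}
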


\begin{remark}\label{rhco}
More generally, we can think of the maps \w{\Fv{n+r}:\W{n+r}\to\Du{n+r}} of 
\S \ref{srcwr}(d) as providing a template for a system of higher operations
$$
\llrr{-}{r}:[\bZ,\,\Omega\sp{n}\oW{n}]~\to~[\bZ,\,\Omega\sp{n+r-1}\oW{n+r}]
$$
\noindent for \w{r=2,3,\dotsc} (here we had \w[).]{\bZ=\bS{k}} By a ``template'' we 
mean that we can fix the maps \w{\Fv{i}} once and for all, while the choices of 
the liftings \w{\Gv{n+r}} may need to be changed if a particular value of 
\w{\llrr{\gamma}{r}} is non-zero. 

From Theorem \ref{thco}, and the fact that the unstable Adams spectral sequence is
unique up to isomorphism, it follows that \emph{any} choice of this template
yields equivalent operations, in the sense that vanishing for one choice implies their
vanishing for any other choice of the maps \w{\Fv{i}} \wwh for the right choice
of data \w[.]{\Gv{i}}
\end{remark}

%
%
\sect{Filtration in the unstable Adams spectral sequence}
\label{cfuass}

We now consider the question of determining the filtration index of a non-zero
element in the unstable Adams spectral sequence.
Here we assume that the coaugmented cosimplicial
space \w{\bY\to\Wu} is constructed as in \S \ref{crcwr} for an $R$-good space $\bY$,
with $R$-completion \w[,]{\hY\simeq\Tot\Wu} so we require that
\begin{myeq}\label{eqpitower}
\pi\sb{k}\Tot\Wu~=~\lim\sb{n}\ \pi\sb{k}\Tot\W{n}~.
\end{myeq}
\noindent If we write \w{p\bp{n}:\Tot\Wu\to\Tot\sb{n}\W{n}} for the appropriate
structure map in \wref[,]{eqmodtott} the \emph{filtration index} for
\w{0\neq\gamma\in\pi\sb{k}\Tot\Wu} is the least $n$ for which
\w[.]{(p\bp{n})\sb{\ast}\gamma\neq 0} Thus
\w[,]{(\prn{n}\circ p\bp{n})\sb{\ast}\gamma=(p\bp{n-1})\sb{\ast}\gamma=0} so
\w{(p\bp{n})\sb{\ast}\gamma} lifts to some element
\w[,]{\alpha\in\pi\sb{k}\Omega\sp{n}\oW{n}=E\sb{1}\sp{n,n-k}}
by Proposition \ref{pqfib}. This $\alpha$ represents $\gamma$ in the spectral
sequence.

In general, $\gamma$ is represented by a map of cosimplicial
spaces \w[,]{\Gamma:\hfsm{\Deu}{\bS{k}}\to\Wu} as in \S
\ref{stott}. However, because $\hY$ is $R$-good, the
$R$-completion \w{\hY\simeq\Tot\Wu} is coaugmented to \w{\Wu} by
Remark \ref{rhcocomp}. Therefore (replacing $\bY$ by $\hY$, if
necessary), we may assume for simplicity that $\gamma$ is
represented by a map \w[.]{g:\bS{k}\to\bY} The map $\Gamma$ thus
factors in cosimplicial dimension $j$ as the composite
\w{\Gamma\sp{j}} of
\begin{myeq}\label{eqsimplecomp}
\hfsm{\Del{j}}{\bS{k}}~\xepic{q}~\bS{k}~\xra{g}~\bY~\xra{D\sp{j}}~\bW\sp{j}~,
\end{myeq}
\noindent where \w{D\sp{j}} is the unique iterated coface map starting from
the coaugmentation, say:
\begin{myeq}\label{eqitcof}
D\sp{j}~:=~d\sp{j}\circ\dotsc d\sp{1}\circ\vare~.
\end{myeq}
\noindent For \w[,]{\Wu=\hWu{n}} we denote the projection of \w{D\sp{j}}
onto the factor \w{P\Omega\sp{n-j-1}\oW{n}} by \w[.]{\oD{j}=\oD{j}\bp{n}}

\begin{mysubsection}{The inductive process}\label{sindfil}
We start with the map \w{\Gamma\sp{0}} given by
\w[.]{\bve{0}\circ g:\bS{k}\to\Wn{0}{0}} If this is non-zero, $g$ has filtration
index $0$ (which means it is
``visible to $R$-cohomology'', since \w{\bve{0}} encodes all cohomology classes of
$\bY$). Otherwise, we have a nullhomotopic map of cosimplicial spaces
\w{\Gamma\bp{0}:\hfsm{\Deu}{\bS{k}}\to\W{0}} (of the simple kind given by
\wref[),]{eqsimplecomp} and we can choose a nullhomotopy \w{\Gv{0}} for it.

In the induction step, until we reach the filtration index, assume
we have \w{\Gamma\bp{n-1}:\hfsm{\Deu}{\bS{k}}\to\W{n-1}} as in
\wref[,]{eqsimplecomp} with a nullhomotopy
\w[.]{\Gv{n-1}:\sms{C\Deu}{\bS{k}}\to\W{n-1}} The $0$-th coface
maps \w{\delta\sp{0}:\Del{j}\hra\Del{j+1}=C\Del{j}} fit together
to define a map of cosimplicial spaces \w[,]{\inc:\Deu\hra C\Deu}
with \w[.]{\Gv{n-1}\circ\inc=\Gamma\bp{n-1}}

From \wref{eqsmatch} we see that to lift \w{\Gv{n-1}} to \w[,]{\hWu{n}}
for each \w{0<j\leq n} we must choose maps
\w{H\sp{j}=H\sp{j}\bp{n}:\sms{C\Del{j}}{\bS{k}}\to P\Omega\sp{n-j-1}\oW{n}}
making the following diagram commute:

\myudiag[\label{eqlnulhtpy}]{
\sms{C\Del{j}}{\bS{k}} \ar@/^{1.5pc}/[rrrrrd]\sp(0,6){G\sp{j}}
\ar@/^{2.5pc}/[rrrrrrrd]\sp(0.6){H\sp{j}} &&&&&&& \\
& \hfsm{\Del{j}}{\bS{k}} \ar@{_{(}->}[ul]\sb{\delta\sp{0}=\inc}
\ar@{->>}[r]\sp(0.6){q} & \bS{k} \ar[r]\sp{g} &
\bY\ar[rr]\sp(0.35){D\sp{j}} \ar@/_{2.1pc}/[rrrr]\sb(0.6){\oD{j}} &&
\Wn{j}{n-1}  & \times & P\Omega\sp{n-j-1}\oW{n} \\
\sms{C\Del{j-1}}{\bS{k}}
\ar@<1.5ex>[uu]\sp(0.5){\delta\sp{1}=}\sp(0.4){Cd\sp{0}}\sb(0.45){\dotsc}
 \ar@<-1.5ex>[uu]\sb(0.5){\delta\sp{j+1}}\sb(0.4){=Cd\sp{j}}
\ar@/^{2.0pc}/[rrrrrd]\sp(0.6){G\sp{j-1}}
\ar@/_{6pc}/[rrrrrrrd]\sp(0.5){H\sp{j-1}} &&&&&&&\\
& \hfsm{\Del{j-1}}{\bS{k}} \ar@{_{(}->}[ul]\sb{\delta\sp{0}=\inc}
\ar@<1.5ex>[uu]\sp(0.7){d\sp{0}}\sb(0.7){\dotsc}
\ar@<-1.5ex>[uu]\sb(0.7){d\sp{j}} \ar@{->>}[r]\sp(0.7){q} & \bS{k}
\ar[r]\sp{g} & \bY\ar[rr]\sp(0.35){D\sp{j-1}}
\ar@/_{1.6pc}/[rrrr]\sb{\oD{j-1}} && \Wn{j-1}{n-1}
\ar@<1.5ex>[uu]\sp(0.6){d\sp{0}}\sb(0.6){\dotsc}
\ar@<-1.5ex>[uu]\sb(0.6){d\sp{j}} \ar[rruu]\sb{d\sp{0}=\Fk{j-1}}
& \times & P\Omega\sp{n-j}\oW{n}
 \ar[uu]\sb(0.45){d\sp{1}=\iota p}
}

\vs\quad

\noindent where \w{q:\hfsm{\Del{j}}{\bS{k}}\to\bS{k}} is the projection.
Note that we have chosen a different indexing for the coface maps
under the identification of \w{C\Del{j}} with \w[.]{C\Del{j+1}}

The maps \w{H\sp{j}} must satisfy:
\begin{myeq}\label{eqlnullhtpy}
\begin{split}
H\sp{j}\circ Cd\sp{0}~=&~\Fk{j-1}\circ G\sp{j-1},\
H\sp{j}\circ Cd\sp{1}~=~\iota\circ p\circ H\sp{j-1},\\
H\sp{j}\rest{\hfsm{\Del{j}}{\bS{k}}}~=&~\oD{j}\circ g\circ q, \ \
 \text{and}\ H\sp{k}\circ Cd\sp{i}~=~0 \ \text{for} \ i\geq 2~,
\end{split}
\end{myeq}
\noindent for \w[,]{0< j\leq n} as in \wref[.]{eqnullhtpy}

Using the conventions of \S \ref{sindcon}, we let
\w{\tH{j}=\tH{j}\bp{n}:\Del{n+1}\to\oW{n}} denote the appropriate adjoint of
\w[,]{H\sp{j}=H\sp{j}\bp{n}} and deduce from \wref{eqlnullhtpy} that for each
\w[:]{0\leq j\leq n}
\begin{myeq}\label{eqcosimpmap}
\tH{j}\circ\delta\sp{i}~=~\begin{cases}
\widetilde{\oD{j}gq} & \text{for}\hsm i=0\\
\widetilde{\iota p H\sp{j}} & \text{for}\hsm i=n-j\hsm \text{and}\hsm j\leq n-1\\
\widetilde{\Fk{j-1}G\sp{j-1}}  & \text{for}\hsm i=n-j+1\hsm
\text{and}\hsm j>0\\
\widetilde{\iota p H\sp{j-1}} & \text{for}\hsm i=n-j+2\hsm \text{and}\hsm j>0\\
0  & \text{otherwise,}
\end{cases}
\end{myeq}
\noindent as in \wref[.]{eqcosimap}

For \w{j=0} we may let \w{H\sp{0}:C\bS{k}\to P\Omega\sp{n-1}\oW{n}} be the
tautological nullhomotopy of
\w{\Fk{-1}\vare g:\bS{k}\to P\Omega\sp{n-1}\oW{n}}
(so that its adjoint \w[,]{\tH{0}:\Del{n+1}\to\oW{n}} depicted on the left in
Figure \ref{fig3}, has $2$-facet
\w[).]{\widetilde{\iota pH\sp{0}}=\widetilde{\Fk{-1}\vare g}}

As before, we use \wref{eqcosimpmap} to glue the maps \w{\tH{j}}
and define a map \w{\tH{}:\hfsm{\hP{n+1}{n+2}}{\bS{k}}\to\oW{n}}
(see Remark \ref{rnewfold}). Its restriction to
\w{\hfsm{\partial\hP{n+1}{n+2}}{\bS{k}}} again depends only on
\w{\bF=\Fv{n}} and the chosen nullhomotopy \w[,]{\bG=\Gv{n-1}} so
we may denote it by
\w[.]{\Psi'\sb{(F,G)}:\hfsm{\partial\hP{n+1}{n+2}}{\bS{k}}\to\oW{n}}
Moreover, \w{\Psi'\sb{(F,G)}} is zero on \w{\{v\}\times\bS{k}} for
any cone vertex $v$ of \w[,]{\hP{n+1}{n}} so it induces a map
\w{\Psi\sb{(F,G)}:\sms{\partial\hP{n+1}{n+2}}{\bS{k}}\to\oW{n}}
from a PL \wwb{n+k}sphere.
\end{mysubsection}

\begin{remark}\label{ritcoface}
If \w[,]{j\geq 2} the projection of \w{d\sp{j}} onto
\w{P\Omega\sp{n-j-1}\oW{n}} is zero, so \w{\oD{j}\bp{n}=0} by
\wref[,]{eqitcof} and thus \w{\tH{j}\bp{n}\circ\delta\sp{n+1}=0}
by \wref[.]{eqcosimpmap} On the other hand,
\w[,]{\tH{1}\bp{n}\circ\delta\sp{n+1}=\widetilde{\Fk{0}\circ\vare\circ
g\circ q}} and
\w[.]{\tH{0}\bp{n}\circ\delta\sp{n+1}=\widetilde{\Fk{-1}\circ g}}
Note that it is consistent with the description in \S \ref{srcwr} to think of
\w{g:\bS{k}\to\bY} as \w[,]{G\sp{-1}} so more suggestively we may write this
last as \w[.]{\widetilde{\Fk{-1}\circ G\sp{-1}}}
\end{remark}

\begin{example}\label{egfldp}
The boundaries of the three constituent tetrahedra of
\w{\hP{3}{3}} are given in Figure \ref{fig3}, including their
identifications, showing how the facets are mapped to \w{\oW{2}}
under \w[.]{\tH{j}\bp{2}}

%
%
\begin{figure}[htbp]
\begin{center}
\begin{picture}(410,160)(10,10)
%
%
\put(70,165){$\partial\Dels{3}{0}$}
%
%
\put(10,10){\circle*{3}}
\put(3,7){\scriptsize $3$}
\put(48,75){\circle*{3}}
\put(40,73){\scriptsize $1$}
\put(85,140){\circle*{3}}
\put(84,145){\scriptsize $3$}
\put(85,10){\circle*{3}}
\put(84,0){\scriptsize $2$}
\put(160,10){\circle*{3}}
\put(164,7){\scriptsize $3$}
\put(122,75){\circle*{3}}
\put(126,73){\scriptsize $0$}
%
%
\put(10,11){\line(1,0){150}}
\put(48,76){\line(1,0){75}}
\bezier{400}(10,10)(48,75)(85,140)
\bezier{400}(160,10)(122,75)(85,140)
\bezier{400}(48,75)(67,43)(85,10)
\bezier{400}(122,75)(104,43)(85,10)
%
%
\put(81,48){\scriptsize $\ast$}
\put(75,98){\scriptsize $\widetilde{\iota pH\sp{0}}$}
\put(30,26){\scriptsize $\widetilde{\Fk{-1}G\sp{-1}}$}
\put(121,32){\scriptsize $\ast$}
%
%
\bezier{40}(100,90)(125,110)(150,130)
\put(105,94){\vector(-1,-1){7}}
\put(145,125){\vector(1,1){7}}
%
%
\put(200,165){$\partial\Dels{3}{1}$}
%
%
\put(210,10){\circle*{3}}
\put(209,0){\scriptsize $2$}
\put(173,75){\circle*{3}}
\put(166,71){\scriptsize $1$}
\put(135,140){\circle*{3}}
\put(129,141){\scriptsize $2$}
\put(210,140){\circle*{3}}
\put(209,144){\scriptsize $0$}
\put(285,140){\circle*{3}}
\put(288,141){\scriptsize $2$}
\put(248,75){\circle*{3}}
\put(251,71){\scriptsize $3$}
%
%
\put(135,141){\line(1,0){150}}
\put(173,76){\line(1,0){75}}
\bezier{400}(210,10)(173,75)(135,140)
\bezier{400}(210,10)(248,75)(285,140)
\bezier{400}(173,75)(192,107)(210,140)
\bezier{400}(248,75)(229,107)(210,140)
%
%
\put(198,93){\scriptsize $\widetilde{\Fk{0} G\sp{0}}$}
\put(240,110){\scriptsize $\iota pH\sp{1}$}
\put(165,110){\scriptsize $\widetilde{\iota pH\sp{0}}$}
\put(198,46){\scriptsize $\widetilde{\Fk{0}\vare gq}$}
%
%
\bezier{40}(260,135)(290,110)(320,85)
\put(265,130){\vector(-1,1){7}}
\put(315,90){\vector(1,-1){7}}
%
%
\put(330,165){$\partial\Dels{3}{2}$}
%
%
\put(260,10){\circle*{3}}
\put(253,7){\scriptsize $1$}
\put(298,75){\circle*{3}}
\put(291,72){\scriptsize $0$}
\put(335,140){\circle*{3}}
\put(334,145){\scriptsize $1$}
\put(335,10){\circle*{3}}
\put(334,0){\scriptsize $2$}
\put(410,10){\circle*{3}}
\put(414,7){\scriptsize $1$}
\put(372,75){\circle*{3}}
\put(376,72){\scriptsize $3$}
%
%
\put(260,11){\line(1,0){150}}
\put(298,76){\line(1,0){75}}
\bezier{400}(260,10)(298,75)(335,140)
\bezier{400}(410,10)(372,75)(335,140)
\bezier{400}(298,75)(317,43)(335,10)
\bezier{400}(372,75)(354,43)(335,10)
%
%
\put(325,48){\scriptsize $\Fk{1} G\sp{1}$}
\put(325,98){\scriptsize $\iota pH\sp{1}$}
\put(296,31){\scriptsize $\ast$}
\put(352,22){\scriptsize $\oD{2}gq=\ast$}
\end{picture}
\end{center}

\caption[fig3]{The three tetrahedra of \ $\hP{3}{3}$, \ mapping to $\oW{3}$}
\label{fig3}
\end{figure}

Note that all $2$-simplices of the boundary \w{\partial\hP{3}{3}} map to \w{\oW{3}}
by zero or by a map of the form \w{\widetilde{\Fk{j} G\sp{j}}} \wb[,]{-1\leq j\leq 1}
except for one mapping by \w[.]{\widetilde{\Fk{0}\vare gq}} However, the
fact that we precompose this map with the projection
\w{q:\hfsm{\Del{1}}{\bS{k}}\to\bS{k}} indicates that this $2$-simplex in
\w{\partial\hP{3}{3}} is \emph{degenerate} (that is, it may be collapsed to
its top edge, the $0$-face of \w[).]{\widetilde{\Fk{0} G\sp{0}}}
\end{example}

As in Lemma \ref{lsmash} we deduce:

\begin{lemma}\label{lsmasht}
Given maps \w{\Fv{n}} and a nullhomotopy \w{\Gv{n-1}} as above,
\w{\Psi\sb{(F,G)}:\sms{\partial\PP{n+1}{n+1}}{\bS{k}}\to\oW{n}} is nullhomotopic if
and only if \w{\Psi'\sb{(F,G)}:\hfsm{\partial\PP{N+1}{r}}{\bS{k}}\to\oW{N+1}}
extends to \w[,]{\hHv{N}:\hfsm{\PP{N+1}{r}}{\bS{k}}\to\oW{N+1}} implying existence
of a nullhomotopy \w{\Hv{n}} \wwh and therefore of a lift of \w{\Gv{n-1}}
to \w[.]{\Gv{n}}
\end{lemma}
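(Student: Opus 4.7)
The plan is to mimic the proof of Lemma \ref{lsmash}, adjusted for the modified folding polytope of Remark \ref{rnewfold} and for the additional ``input'' face coming from $g\colon\bS{k}\to\bY$ (playing the role of $G\sp{-1}$, cf.\ Remark \ref{ritcoface}).

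First I would check the PL-ball property for $\hP{n+1}{n+2}$: an induction on the number of glued simplices, analogous to Remark \ref{rfoldp}, shows that $\hP{n+1}{n+2}$ is PL-equivalent to an $(n+1)$-ball, so its boundary is a PL $n$-sphere and the cone $C\partial\hP{n+1}{n+2}$ is homeomorphic to $\hP{n+1}{n+2}$. Moreover the quotient map
\[
q\colon\hfsm{\partial\hP{n+1}{n+2}}{\bS{k}}\epic\sms{\partial\hP{n+1}{n+2}}{\bS{k}}
\]
extends canonically to $q'\colon\hfsm{\hP{n+1}{n+2}}{\bS{k}}\epic\sms{C\partial\hP{n+1}{n+2}}{\bS{k}}$, since $\Psi'\sb{(F,G)}$ already vanishes on each slice $\{v\}\times\bS{k}$ for $v$ a cone vertex of any constituent $(n+1)$-simplex.

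Given a nullhomotopy of $\Psi\sb{(F,G)}$, precomposing with $q'$ produces the required extension $\hHv{n}\colon\hfsm{\hP{n+1}{n+2}}{\bS{k}}\to\oW{n}$ of $\Psi'\sb{(F,G)}$. Restricting $\hHv{n}$ to each of the $n+2$ constituent simplices $\Dels{n+1}{j}$ ($0\leq j\leq n+1$) yields maps $\tH{j}\colon\hfsm{\Del{n+1}}{\bS{k}}\to\oW{n}$. The compatibility of these restrictions on the identified facets of $\hP{n+1}{n+2}$ translates, via the adjunction used in the proof of Lemma \ref{ldtot}, precisely into the relations \wref{eqcosimpmap}: the $0$-th facet records the iterated coface map $\oD{j}\circ g\circ q$ coming from $\bY\to\bW\sp{j}$, the path/loop facets encode $\iota p H\sp{j}$ and $\iota p H\sp{j-1}$, the facet corresponding to the newly attached $G$-slot records $\Fk{j-1}G\sp{j-1}$, and the remaining facets are zero. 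Taking adjoints (path/loop directions to cone directions, as in \S \ref{sindcon}) produces maps $H\sp{j}=H\sp{j}\bp{n}$ satisfying \wref{eqlnullhtpy}.

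Assembling the $H\sp{j}$ with the data of $\Gv{n-1}$ and $\Fv{n-1}$ via \wref{eqsmatch} gives, by the universal property of the matching objects, a lift $\hGv{n}\colon\hfsm{\Deu}{\bS{k}}\to\hWu{n}$ of $\Gv{n-1}$. Because our inductive convention keeps $\Gn{i}{n-1}$ (and hence $\hGn{i}{n}$) zero for $i<n$, the LLP diagram \wref{eqllp}, applied in the Reedy model category of cosimplicial spaces to the trivial Reedy fibration $q\bp{n}\colon\W{n}\xepic{\simeq}\hWu{n}$, produces the desired strict lift $\Gv{n}\colon\hfsm{\Deu}{\bS{k}}\to\W{n}$. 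The converse direction is essentially definitional: a choice of $\Hv{n}$ restricts to the identified facets giving back $\Psi'\sb{(F,G)}$, hence an extension of $\Psi\sb{(F,G)}$ to the cone, i.e.\ a nullhomotopy. The main technical point is verifying that the $(n+2)$-fold gluing pattern of $\hP{n+1}{n+2}$ (different from that of $\PP{n+1}{n+1}$) matches the cosimplicial identities dictated by \wref{eqcosimpmap}; once this combinatorial check is in place, the rest parallels the argument of Lemma \ref{lsmash} verbatim.
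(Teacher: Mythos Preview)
Your proposal is correct and follows the paper's approach exactly: the paper gives no separate proof, stating only ``As in Lemma \ref{lsmash} we deduce,'' and your sketch is a faithful elaboration of that analogy for the modified folding polytope. One minor slip: the clause ``our inductive convention keeps $\Gn{i}{n-1}$ \dots\ zero for $i<n$'' and the specific form of diagram \wref{eqllp} belong to the differential setting of \S \ref{sindcon}, not to the filtration setting of \S \ref{sindfil}, where $\Gv{n-1}$ is a nullhomotopy of the \emph{nonzero} map $\Gamma\bp{n-1}$ of \wref{eqsimplecomp}; the LLP against the trivial Reedy fibration $q\bp{n}$ still supplies the lift without that hypothesis, so this does not affect the argument.
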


\begin{defn}\label{dfhco}
By analogy with \S \ref{dhco}, given a space $\bY$, a CW resolution
\w{\Vd\in s\RAlg} of \w[,]{\HsR{\bY}} and maps \w{\Fv{n}:\W{n}\to\Du{n}} 
\wb[,]{n\geq 0} yielding a realization \w{\Wu} of \w{\Vd} as in \S \ref{srcwr}, 
we have a new sequence of higher cohomology operations
\w{\llrrp{-}{r}:\pi\sb{k}\bY\to\pi\sb{k+n}\oW{n}} for each pair \w[,]{(k,n)}
which serve as obstructions to representing \w{\gamma\in\pi\sb{k}\bY} in
higher Adams filtration by lifting nullhomotopies \w{\Gv{i}} \wb{0\leq i<n}
\wwh the \emph{data} for \w{\llrrp{-}{r}} \wwh
to a nullhomotopy \w[.]{\Gv{n}}

The \emph{value} of \w{\llrrp{\gamma}{r}} associated to this data is
the class of \w{[\Psi\sb{(F,G)}]\in\pi\sb{n+k}\oW{n}} constructed in
\S \ref{sindfil}. The \emph{indeterminacy} of \w{\llrrp{\gamma}{r}} is
the subset of \w{\pi\sb{n+k}\oW{n}} consisting of all possible values, for all
(compatible) choices of the data \w{\Gv{i}} (with the maps
\w{(\Fv{n}:\W{n}\to\Du{n})\sb{n=0}\sp{\infty}} again fixed).
\end{defn}

Remark \ref{rhco} as to the independence of the operations \w{\llrrp{-}{r}}
from the ``template'' maps \w{\Fv{n}:\W{n}\to\Du{n}} applies here too,
\emph{mutatis mutandis}, because of the following:

\begin{thm}\label{tfil}
For any \w[,]{0\neq\gamma\in\pi\sb{k}\bY} the operation \w{\llrrp{\gamma}{n-1}}
vanishes while \w{\llrrp{\gamma}{n}\neq 0} if and only if $\gamma$ is represented
in the unstable Adams spectral sequence in filtration $n$ by the value of
\w{\llrrp{\gamma}{n}} in \w[.]{\pi\sb{k}\Omega\sp{n}\oW{n}=E\sb{1}\sp{n,n-k}}
\end{thm}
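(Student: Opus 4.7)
The plan is to reduce Theorem~\ref{tfil} to two claims: first, that $\llrrp{\gamma}{r}$ admits data with vanishing value if and only if $(p\bp{r})\sb{\ast}\gamma=0$ in $\pi\sb{k}\Tot\sb{r}\W{r}$; and second, that, assuming $\llrrp{\gamma}{n-1}=0$, the value $\llrrp{\gamma}{n}\in\pi\sb{k+n}\oW{n}$ represents the unique lift of $(p\bp{n})\sb{\ast}\gamma$ through the fiber $\Omega\sp{n}N\sp{n}\W{n}\cong\Omega\sp{n}\oW{n}$ of the quasifibration of Proposition~\ref{pqfib}. From these two claims the theorem follows immediately: $\gamma$ has filtration exactly $n$ iff $(p\bp{n-1})\sb{\ast}\gamma=0$ and $(p\bp{n})\sb{\ast}\gamma\neq 0$, in which case the resulting $\llrrp{\gamma}{n}$ is by construction the $E\sb{1}\sp{n,\ast}$-representative of $\gamma$.

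For the first claim I would induct on $r$. The base case $r=0$ is the observation that $\llrrp{\gamma}{0}$ is the class of $\bve{0}\circ g:\bS{k}\to\Wn{0}{0}$, whose vanishing is manifestly $(p\bp{0})\sb{\ast}\gamma=0$. For the inductive step, assume $\Gv{r-1}$ has been chosen (so $\llrrp{\gamma}{r-1}=0$). Lemma~\ref{lsmasht} says that $[\Psi\sb{(F,G)}]=0$ exactly when the map on $\hfsm{\partial\hP{r+1}{r+2}}{\bS{k}}$ extends over the modified folding polytope, which is equivalent to the existence of a nullhomotopy $\Hv{r}$ and, via the Reedy lifting square of Lemma~\ref{lsmash}, of a lift $\Gv{r}$. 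On the tower side, a choice of $\Gv{r}$ is precisely a nullhomotopy of the cosimplicial map $\Gamma\bp{r}$ of \wref{eqsimplecomp}, which by Proposition~\ref{pqfib} together with \wref{eqnorch} exists iff $(p\bp{r})\sb{\ast}\gamma=0$.

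For the second claim, assuming $\Gv{n-1}$ has been constructed, the fibration sequence \wref{eqtotfs} yields a unique lift $\alpha\in\pi\sb{k}\Omega\sp{n}\oW{n}$ of $(p\bp{n})\sb{\ast}\gamma$; one needs to show $[\Psi\sb{(F,G)}]=\alpha$. Here the adjoint maps $\tH{j}\bp{n}$ of \S\ref{sindfil} assemble via the cosimplicial relations \wref{eqcosimpmap} into a single map out of $\hP{n+1}{n+2}$, and its restriction to the boundary PL $(n+k)$-sphere $\partial\hP{n+1}{n+2}$ (after collapsing the cone vertices to the basepoint) represents the image of $\Gv{n-1}$ under the connecting map of \wref{eqtotfs}. \emph{The main obstacle} is precisely this final identification: it requires tracking the adjunction between cone/path directions and simplicial faces (as in the proof of Lemma~\ref{ldtot}) through the modified folding polytope, and verifying that the ``$\vare$-face'' $\widetilde{\oD{j}gq}$ entering \wref{eqcosimpmap} encodes exactly the connecting homomorphism, so that the spherical witness produced combinatorially from $\partial\hP{n+1}{n+2}$ agrees with the one produced homotopy-theoretically from the fiber sequence. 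The rest then follows by combining the two claims.
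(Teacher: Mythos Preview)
Your two–claim decomposition matches the paper's approach exactly: the first claim is the paper's opening paragraph (via Lemma~\ref{lsmasht}), and the second claim is the substance of the proof. You have correctly located the main obstacle.

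Where your sketch and the paper diverge is in how that obstacle is resolved. You propose to track the cone/path adjunction through the modified folding polytope and identify \w{[\Psi\sb{(F,G)}]} with the connecting-map image of \w{\Gv{n-1}} in \wref[.]{eqtotfs} The paper instead works directly on the cosimplicial side: it sets \w{\phi\sp{j}:=\Fk{j-1}\circ G\sp{j-1}} for \w[,]{0\leq j\leq n} checks by hand (equations \wref{eqequals}--\wref{eqcoside}) that these satisfy the relations of \wref{eqmaptosfib} and hence determine a map \w[,]{\phi:\bS{k}\to\Tot\sDu{n}} and then builds an explicit homotopy \w{K:\hfsm{I}{\bS{k}}\to\Tot\sb{n}\hWu{n}} between \w{\Gamma\bp{n}} and \w{(i\bp{n})\sb{\ast}\phi} using the prism-collapse maps \w{t\sp{j}:\Del{j}\times I\epic C\Del{j}} (see \wref[).]{eqhtpy} This sidesteps the folding-polytope bookkeeping you anticipated; the identification of \w{[\phi]} with \w{[\Psi\sb{(F,G)}]} is then implicit in Lemma~\ref{ldtot}, since the faces \w{\widetilde{\Fk{j-1}G\sp{j-1}}} appearing on \w{\partial\hP{n+1}{n+2}} are exactly the adjoints of the \w[.]{\phi\sp{j}} Your route would also work, but the paper's explicit \w{K} is more economical than chasing the connecting homomorphism through the adjunctions.
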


\begin{proof}
By  Lemma \ref{lsmasht} \w{\llrrp{\gamma}{n-1}} vanishes (for some choice of
nullhomotopy \w[)]{\Gv{n-1}} if and only if we have a lift of \w{\Gv{n-1}} to
a nullhomotopy \w[,]{\Gv{n}} so $\gamma$ has Adams filtration $\geq n$.
Assume \w{\llrrp{\gamma}{n}} is represented by
\w[,]{\Psi\sb{(F,G)}:\sms{\partial\hP{n+1}{n}}{\bS{k}}\to\oW{n}} induced by
\w{\tH{}:\hfsm{\hP{n+1}{n}}{\bS{k}}\to\oW{n}} as in \S \ref{sindfil}.

For each \w{0\leq j\leq n} denote \w{\Fk{j-1}\circ G\sp{j-1}} by
\w[.]{\phi\sp{j}:\hfsm{\Del{j}}{\bS{k}}\to P\Omega\sp{n-j-1}\oW{n}}
For \w{j=0} we let \w{\sms{C\Del{-1}}{\bS{k}}:=\bS{k}} and \w{\Wn{-1}{n-1}:=\bY}
(compare \S \ref{srcwr}(b)), with \w{G\sp{-1}:\sms{C\Del{-1}}{\bS{k}}\to\Wn{-1}{n-1}}
equal to $g$ as in \S \ref{ritcoface}.

Using the conventions of \S \ref{sindfil} for \w[,]{j\geq 1} we write
\w{\delta\sp{0}}  for the inclusion of the base
\w[,]{\hfsm{\Del{j}}{\bS{k}}\hra\sms{C\Del{j}}{\bS{k}}} and \w{\delta\sp{i+1}} for
\w{Cd\sp{i}:\sms{C\Del{j-1}}{\bS{k}}:\to\sms{C\Del{j}}{\bS{k}}}
\wb[.]{0\leq i\leq j}
From \wref{eqlnulhtpy} and \wref{eqlnullhtpy} we see that for each
\w{j\geq 1} and \w[,]{0\leq i\leq j} we have
\begin{myeq}\label{eqequals}
\phi\sp{j+1}\circ \delta\sp{i+1}:=\Fk{j}\circ G\sp{j}\circ Cd\sp{i}=
\Fk{j}\circ d\sp{i}\circ G\sp{j-1}=
d\sp{0}\circ d\sp{i}\circ G\sp{j-1}=d\sp{i+1}\circ d\sp{0}\circ G\sp{j-1}
\end{myeq}
\noindent into \w[,]{P\Omega\sp{n-j-2}\oW{n}} which is
\w{d\sp{1}\circ\Fk{j-1}\circ G\sp{j-1}=d\sp{1}\circ\phi\sp{j}} when \w[.]{i=0}
When \w[,]{i\geq 1} the projection of \w{d\sp{i+1}} onto \w{P\Omega\sp{n-j-2}\oW{n}}
vanishes (cf.\ \S \ref{srcwr}), so by \wref{eqequals} both
\w{\phi\sp{j+1}\circ \delta\sp{i+1}} and \w{d\sp{i+1}\circ\phi\sp{j}} are zero there.

Similarly, by \wref{eqitcof} we have
\begin{myeq}\label{eqcosidea}
\phi\sp{j+1}\circ \delta\sp{0}=\Fk{j}\circ G\sp{j}\rest{\hfsm{\Del{j}}{\bS{k}}}
=\Fk{j}\circ D\sp{j}\circ g=d\sp{0}d\sp{j}\dotsc\vare\circ g~=
d\sp{j+1}d\sp{j}\dotsc\vare\circ g~=~0
\end{myeq}
\noindent into \w[,]{P\Omega\sp{n-j-2}\oW{n}} and also
\w{d\sp{0}\rest{P\Omega\sp{n-j-1}\oW{n}}} is zero.

Finally, for \w{j=0} we write \w{\delta\sp{1}} for the inclusion of the base
\w{\bS{k}} into \w[,]{C\bS{k}=\sms{\Del{1}}{\bS{k}}} and \w{\delta\sp{0}} for the
map collapsing \w{\bS{k}} to the cone point, so we have:
\begin{myeq}\label{eqcosideb}
\phi\sp{1}\circ \delta\sp{i}:=\Fk{0}\circ G\sp{0}\circ \delta\sp{i}=
\begin{cases}
\iota p\circ\Fk{-1}\circ G\sp{-1}=d\sp{1}\phi\sp{0}& \text{if}\ i=1\\
0=d\sp{0}\phi\sp{0}& \text{if}\ i=0\\
\end{cases}
\end{myeq}
\noindent into \w[.]{P\Omega\sp{n-2}\oW{n}}

Thus for each \w{j\geq 0} we have:
\begin{myeq}\label{eqcoside}
d\sp{i}\circ\phi\sp{j}~=~\phi\sp{j+1}\circ \delta\sp{i} \hsp\text{for all} \
0\leq i\leq j+1
\end{myeq}
\noindent (into \w[).]{P\Omega\sp{n-j-2}\oW{n}}

Therefore, by the proof of Lemma \ref{ldtot},
the maps \w{\phi\sp{j}:\hfsm{\Del{j}}{\bS{k}}\to P\Omega\sp{n-j-1}\oW{n}}
uniquely determine a pointed map \w[.]{\phi:\bS{k}\to\Tot\sDu{n}}
The inclusion \w{i\bp{n}:\sDu{n}\hra\hWu{n}} allows us to think of $\phi$ as
a map \w[,]{(i\bp{n})\sb{\ast}\phi=\varphi:\bS{k}\to\Tot\sb{n}\hWu{n}} which maps
by zero to the factor \w{\Wn{j}{n-1}} of \w{\hWn{j}{n}} in \wref[.]{eqcindstep}

Let \w{t\sp{j}:\Del{j}\times I\epic C\Del{j}\cong\Del{j+1}} be the map
collapsing the top of the prism, \w[,]{\Del{j}\times\{1\}} to a point,
and let \w{\sigma\sp{0}:\Del{j+1}\epic\Del{j}} be the $0$-the codegeneracy
map of \w[.]{\Deu}

We then define a pointed homotopy \w{K:\hfsm{I}{\bS{k}}\to\Tot\sb{n}\hWu{n}}
by setting \w{K\sp{j}:\hfsm{(\Del{j}\times I)}{\bS{k}}\to\hWn{j}{n}} equal to:
\myrdiag[\label{eqhtpy}]{
\hfsm{(\Del{j}\times I)}{\bS{k}}
\ar[rrrrr]\sp(0.42){(G\sp{j}\circ t\sp{j})\top(X\sp{j})\top
(\Fk{j-1}\circ G\sp{j-1}\circ\sigma\sp{0}\circ t\sp{j})}
&&&&& \Wn{j}{n-1}\times \hM{r-1}{n}\times P\Omega\sp{n-j-1}\oW{n}
}
\noindent for each \w[,]{1\leq j\leq n} where \w{X\sp{j}} is determined by the
codegeneracies on \w{G\sp{j}} and
\w[.]{\Fk{j-1}\circ G\sp{j-1}\circ\sigma\sp{0}\circ t\sp{j}}
When \w{j=0} we have \w{\Gamma\bp{n}\sp{0}=\varphi\sp{0}} into
\w[,]{P\Omega\sp{n-1}\oW{n}} with the third map the identity homotopy.

The restriction \w{K\circ\iota\sb{0}} to the $0$-end of $I$ (the base of each
prism \w[)]{\Del{j}\times I}
is the given map \w[,]{\Gamma\bp{n}:\hfsm{C\Deu}{\bS{k}}\to\hWu{n}} since
\w{K\sp{j}\circ\iota\sb{0}=D\sp{j}\circ g\circ q:\hfsm{\Del{j}}{\bS{k}}\to\hWn{j}{n}}
by \wref[.]{eqlnulhtpy}

On the other hand, \w{K\circ\iota\sb{1}} is zero into each factor \w[,]{\Wn{j}{n-1}}
since \w{G\sp{j}} is a nullhomotopy, while the component into
\w{P\Omega\sp{n-j-1}\oW{n}} is \w[.]{\Fk{j-1}\circ G\sp{j-1}~=~\phi\sp{j}}
Thus \w{K\circ\iota\sb{1}} is \w[,]{\varphi:\bS{k}\to\Tot\sb{n}\hWu{n}}
showing that \w{[\phi]\in\pi\sb{k}\Tot\sDu{n}} (the value of
\w[)]{\llrrp{\gamma}{n-1}} indeed represents \w{\Gamma\bp{n}}
(the lift of $\gamma$ to the $n$-th level in \wref[).]{eqmodtott}
\end{proof}

\begin{mysubsection}{The one-dimensional case}\label{egdifone}
From the description in \S \ref{srcwr} we have
\mydiagram[\label{eqwone}]{
\bY \ar[d]\sb{\bve{1}} &=&&& \bY\ar[dll]\sb{\bve{0}} \ar[drr]\sp{\Fk{-1}}&&\\
\Wn{0}{1} \ar@/_{1.5pc}/[d]\sb{\dz{0}} \ar@/^{1.5pc}/[d]\sp{d\sp{1}\sb{0}} &=&
\oW{0} \ar@/_{0.5pc}/[d]\sb{\dz{0}=d\sp{1}\sb{0}=\Id}
\ar[drr]\sp{\udz{0}} && \times &&
P\oW{1} \ar[dll]\sb{d\sp{1}\sb{0}=p}
\ar@/^{0.5pc}/[d]\sp{\dz{0}=d\sp{1}\sb{0}=\Id} \\
\Wn{1}{1} \ar[u]\sp{s\sp{0}} &=& \oW{0}\ar@/_{0.5pc}/[u]\sb{=} &\times &
\oW{1} & \times & P\oW{1}.\ar@/^{0.5pc}/[u]\sp{=}
}
\noindent where $p$ is the path fibration, so \w{\Fk{-1}} is a nullhomotopy for
\w[,]{a\sp{-1}:=\udz{0}\circ\bve{0}} in the notation of \wref[.]{eqcochainmap}

If \w{\gamma\in\pi\sb{i}\hY} has filtration index $\geq 1$, it can be represented by
a map \w{g:\bS{i}\to\bY} with \w[.]{\bve{0}\circ g\sim\ast} Choosing a
nullhomotopy \w{G:\bS{i}\to P\oW{0}} (with \w[),]{p\circ G=\bve{0}\circ g}
we obtain a solid commutating diagram
\mydiagram[\label{eqfone}]{
&&& P\oW{0} \ar[rr]\sp{P\udz{0}} \ar[dr]\sp{p} && P\oW{1} \ar[dr]\sp{p} & \\
\bS{i} \ar@/^1.5pc/[rrru]\sp{G} \ar[d]\sb{g} \ar@{-->}[rr]\sp{\vartheta} &&
\Omega\oW{1} \ar[ru] \ar[rd] && \oW{0} \ar[rr]\sp{\udz{0}} && \oW{1}\\
\bY \ar[rrr]\sb{\Fk{-1}} \ar[rrrru]\sp(0.7){\bve{0}} |!{[rrr];[uu]}\hole&&&
P\oW{1} \ar@/_0.5pc/[rrru]\sb{p} &&&
}
\noindent where the outer right pentagon is Cartesian, thus defining $\vartheta$
into the pullback, which is a (non-standard model for) \w[.]{\Omega\oW{1}}
In fact, \w{[\vartheta]} is (one value of) the Toda bracket
\w[,]{\lra{\udz{0},\,\bve{0},\,g}\subseteq\pi\sb{i+1}\oW{1}} and it is non-zero
if and only if $g$ has filtration $1$, since \w[,]{\Omega\oW{1}=\Omega N\sp{1}\W{1}}
and \w{j\sp{1}[\vartheta]} is the lift of $g$ to
\w{\Tot\sb{1}\Wu\simeq\Tot\sb{1}\W{1}} (cf.\ \S \ref{sucws}) in Figure \ref{fig1}.

Combining this with Theorem \ref{tfil}, we see that $\gamma$ has filtration
index $1$ if and only if there is a secondary cohomology operation which acts on it
non-trivially (that is, cannot be made to vanish for any choice of nullhomotopies).

More generally, $\gamma$ has filtration index $n$ if and only if there is an
\wwb{n+1}th order cohomology operation which acts on it non-trivially (where
$n$ is necessarily the highest order for which this is possible).
\end{mysubsection}

\begin{example}\label{eqsecopn}
Consider the cofibration sequence
\w{\bS{n}\xra{2}\bS{n}\xra{r}\bS{n}\cup\sb{2}\be{n+1}\xra{\nabla}\bS{n+1}}
for the mod $2$ Moore space, fitting into the homotopy commutative diagram
\myydiag[\label{eqmooresp}]{
\bS{n}\ar[r]\sp{2}& \bS{n} \ar[r]\sp(0.35){\inc} \ar[rrd]\sb{\iota\sb{n}} &
\bS{n}\cup\sb{2}\be{n+1} \ar[r]\sp{\nabla} \ar[rd]\sp{p} &
\bS{n+1} \ar[r]\sp(0.4){q} & \KZ{n+1} \ar[d]\sp{\rho\sb{2}} \\
&&& \KP{\Ft}{n} \ar[r]\sp(0.4){\Sq{1}} & \KP{\Ft}{n+1}
}
\noindent where $p$ and $q$ are the appropriate Postnikov fibrations and
\w{\rho\sb{2}} is the reduction mod $2$ map.

Therefore, we can realize a free simplicial resolution of the \TRal
\w{\Hus{\bS{n}}{\Ft}} by \w{\bY=\bS{n}\to\Wu} in cosimplicial dimensions 
\w[:]{\leq 1}
\begin{myeq}\label{eqressn}
\bS{n}~\xra{\iota\sb{n}}~\KP{\Ft}{n}~\xra{\Sq{1}}~\KP{\Ft}{n+1}
\end{myeq}
\noindent For simplicity we have omitted all factors \w{\KP{\Ft}{i}}
for \w[.]{i>n+1} Assuming that \w[,]{n\geq 3} this means that we 
are in the stable range, so we can also omit the codegeneracies, the
contractible path space factors, and thus the coface maps
other than \w[,]{d\sp{0}} which were shown in \wref[.]{eqfone}

Since the map \w{\gamma=2} in \w{\pi\sb{n}\bS{n}}
is invisible in \w[,]{\Hus{-}{\Ft}} it has Adams filtration \w[.]{\geq 1}

However, the diagram
\myzdiag[\label{eqtoda}]{
\bS{n}~\ar[rr]\sp{\gamma=2} \ar@{^{(}->}[d]\sp{\iota} &&
\bS{n}~\ar@{^{(}->}[rr]\sp{\inc} \ar@/^{2.0pc}/[rrrr]\sp{0} &&
\bS{n}\cup\sb{2}\be{n+1} \ar[rr]\sp{\nabla} && \bS{n+1} \\
C\bS{n}\ar[rrrru]\sb{\Id\sb{\be{n+1}}} &&&&&&
}
\noindent shows that one value of the Toda bracket \w{\lra{\nabla,\inc,\gamma}} is
the pinch map \w[,]{\Sigma\bS{n}\cong C\bS{n}/\bS{n}\xra{\cong}\bS{n+1}}
which has degree $1$. Post-composing with \w{\rho\sb{2}\circ q} of 
\wref{eqmooresp} we obtain the value \w{\iota\sb{n+1}\in\pi\sb{n+1}\KP{\Ft}{n+1}}
for the associated secondary cohomology operation 
\w{\lra{\Sq{1},\iota\sb{n},\gamma}} (see \cite[Ch.~1]{TodC}), with indeterminacy
$$
\{0\}~=~2\cdot\pi\sb{n+1}\KP{\Ft}{n+1}+\Sq{1}\sb{\ast}\pi\sb{n}\KP{\Ft}{n}~.
$$
This shows that in fact $\gamma$ has filtration index $1$, as expected.
\end{example}

\end{document}